\title{Dynamics of breathers in the Gardner hierarchy: universality of the variational characterization} 
\author{Miguel A. Alejo}
\thanks{M. A. was partially funded by Product. CNPq grant no. 305205/2016-1, IMUS and VI PPIT-US program ref. I3C}
\author{Eleomar Cardoso}
\address{Departamento de Matem\'atica, Universidade Federal de Santa Catarina, Brasil}
\email{miguel.alejo@ufsc.br}
\email{eleomar.junior@ufsc.br}
\date{\today}
\subjclass[2000]{Primary 37K15, 35Q53; Secondary 35Q51, 37K10}
\keywords{Higher order Gardner equation, hierarchy breather, integrability}
\thanks{}
\chardef\bslash=`\\ 
\newtheorem{thm}{Theorem}[section]
\newtheorem{cor}[thm]{Corollary}
\newtheorem{lem}[thm]{Lemma}
\newtheorem{prop}[thm]{Proposition}
\newtheorem{defn}[thm]{Definition}
\theoremstyle{remark}
\newtheorem{rem}{Remark}[section]
\numberwithin{equation}{section}
\newcommand{\R}{\mathbb{R}}
\newcommand{\N}{\mathbb{N}}
\newcommand{\Z}{\mathbb{Z}}
\newcommand{\T}{\mathbb{T}}
\newcommand{\al}{\alpha}
\newcommand{\bt}{\beta}
\newcommand{\ga}{\gamma}
\newcommand{\px}{\partial_x}
\newcommand{\sech}{\operatorname{sech}}
\newcommand{\re}{\operatorname{Re}}
\newcommand{\ima}{\operatorname{Im}}
\newcommand{\be}{\begin{equation}}
\newcommand{\ee}{\end{equation}}
\newcommand{\bp}{\begin{proof}}
\newcommand{\ep}{\end{proof}}
\newcommand{\bel}{\begin{equation}\label}
\newcommand{\eeq}{\end{equation}}
\newcommand{\bea}{\begin{eqnarray}}
\newcommand{\eea}{\end{eqnarray}}
\newcommand{\bee}{\begin{eqnarray*}}
\newcommand{\eee}{\end{eqnarray*}}
\newcommand{\ben}{\begin{enumerate}}
\newcommand{\een}{\end{enumerate}}
\newcommand{\nonu}{\nonumber}
\newcommand{\ms}{\medskip}
\newcommand{\sn}{\operatorname{sn}}
\newcommand{\nd}{\operatorname{nd}}
\newcommand{\eval}[2][\right]{\relax
  \ifx#1\right\relax \left.\fi#2#1\rvert}
\begin{document}
\begin{abstract}
We present a new variational characterization of  breather solutions of any equation of the \emph{focusing}
Gardner hierarchy. This hierarchy is characterized by a nonnegative index $n$, and $2n+1$ represents the order of the corresponding PDE member. 
In this paper, we first show the existence of such breathers, and that they are solutions of the (2n+1)th-order Gardner equation.
Then we prove a \emph{variational universality property}, in the sense that all these breather solutions satisfy the \emph{same}
fourth order stationary elliptic ODE, regardless the order of the hierarchy member. This fact also characterizes them as critical points of the same Lyapunov functional, that
we also construct here. As by product of our approach, we find breather solutions of the hierarchy of
(2n+1)th-order mKdV equations, as well as a respective characterization of them as solutions of a fourth order stationary elliptic ODE. 
We also extend part of these results to the periodic setting, presenting new breather solutions for the 5th and 7th mKdV members of the hierarchy.
Finally, we prove ill-posedness results for the whole Gardner hierarchy, by using appropiately their breather solutions.
\end{abstract}
\maketitle \markboth{ Breathers of higher order Gardner equations}{Miguel A. Alejo and Eleomar Cardoso}
\renewcommand{\sectionmark}[1]{}

\section{Introduction}\label{0}
In this work we are concerned with the \emph{focusing} Gardner hierarchy  that we define as follows

\be\begin{aligned}\label{nthfocG}
& u_t = - \frac{\partial}{\partial x}\Big(-i\frac{\partial}{\partial x} + 2(\mu+u)\Big)\mathcal L_{n}[iu_x + (\mu+u)^2],\\
&\\
&\quad \quad u(t,x)\in\R,\quad\quad~~\mu\in\R^+,\quad\quad~n\in\N,\\
\end{aligned}\ee

\medskip
\noindent
which we will call it hereafter as $(2n+1)$th-order \emph{focusing} Gardner equations. Here $\mathcal L_n$ are the \emph{Lenard} operators defined  recursively by

\be\begin{aligned}\label{lenard}
&\frac{\partial}{\partial x}\mathcal L_{n+1}[v] = \Big(\frac{\partial^3}{\partial x^3}+4v\frac{\partial}{\partial x} + 2v_x\Big)\mathcal L_{n}[v] ,\quad n\in\N.
\end{aligned}\ee

For instance, starting with $\mathcal L_{0}[v] = \frac{1}{2}$, we get

\be\begin{aligned}\label{lenard2a}
\mathcal L_{1}[v] =&~{}v,\\
\mathcal L_{2}[v] =&~{} v_{xx} + 3v^2,\\
\mathcal L_{3}[v] =&~{} v_{4x} + 10vv_{xx}+5v_x^2+10v^3,
\end{aligned}\ee
and
\be\begin{aligned}\label{lenard2b}
\mathcal L_{4}[v] =&~{}v_{6x} +14vv_{4x} +28v_xv_{3x} + 21v_{xx}^2 + 70v^2v_{xx} + 70vv_{x}^2 + 35v^4,\\
\mathcal L_{5}[v] =&~{} v_{8x} + 18vv_{6x} + 54v_xv_{5x} +114v_{2x}v_{4x} + 126v^2v_{4x} + 69v_{3x}^2  \\
&+ 504vv_{x}v_{3x} + 420v^3v_{xx} + 390vv_{2x}^2 + 630v^2v_{x}^2 \\
& + 462v_x^2v_{2x} + 966v_x^2v_{3x} + 1260v^5v_x^2 + 126v^5.
\end{aligned}\ee

%

This recursion relation \eqref{nthfocG} generates the whole \emph{focusing} Gardner hierarchy. Note that we will only work with the focusing version
(via the complex Miura transformation $iu_x + (\mu+u)^2$ \cite{Miura})
since we are
interested in real valued and regular solutions (namely, solitons and breathers), instead of the singular structures which appear in the defocusing Gardner hierarchy,
and which, on the other hand, was already studied by Gomes et al \cite{gomes}. Moreover, only  positive values for the parameter $\mu$ will be considered in the present work,
since as it was recently discovered by Mu\~noz and Ponce \cite{mupo}, Gardner breathers do not exist when  $\mu\in\R^{-}$. See below for more details of 
breather solutions \eqref{nthbreatherG}.

\medskip

 Using the Lenard operators $\mathcal L_n$ \eqref{lenard}-\eqref{lenard2a}-\eqref{lenard2b} and a suitable spatial translation to remove linear terms, 
 a first few members of the  Gardner hierarchy are  (see appendix \ref{9mkdv} for 9th and 11th order Gardner equations):

\medskip

\noindent
the \emph{focusing} Gardner ($n=1$)
\be\begin{aligned}\label{examplesG}
&u_{t} +(u_{xx}  +6\mu u^2 + 2u^3)_x=0,\\
\end{aligned}\ee

\noindent
\emph{the 5th-order Gardner} ($n=2$)
\be\begin{aligned}\label{examplesG5}
& u_t + u_{5x}  +10 \mu ^2 u_{3x}+20 \mu  u u_{3x}+10 u^2 u_{3x}+120 \mu^3 u u_x\\
&+180 \mu ^2 u^2 u_x+120 \mu  u^3 u_x+10 u_x^3+30 u^4u_x+40 \mu  u_xu_{xx}+40uu_xu_{xx}=0,\\
\end{aligned}\ee

\medskip
\noindent
\emph{the 7h-order Gardner} ($n=3$)
\be\begin{aligned}\label{examplesG7}
&u_{t} + u_{7x} +14 \mu ^2u_{5x}+28 \mu uu_{5x}+14u^2u_{5x}+70 \mu ^4 u_{3x}+280 \mu ^3 uu_{3x}\\
&+420 \mu ^2 u^2u_{3x}+280 \mu u^3u_{3x}+70u^4u_{3x} +840 \mu ^5uu_x+2100 \mu ^4u^2u_x+2800 \mu ^3 u^3u_x\\
&+420 \mu ^2u_x^3 +2100 \mu ^2 u^4u_x+840 \mu uu_x^3 +840 \mu u^5u_x +420 u^2u_x^3+140 u^6u_x\\
&+84 \mu u_{4x}u_x+84 uu_xu_{4x} +140 \mu u_{xx}u_{3x} +140 uu_{xx}u_{3x}+126 u_x^2u_{3x}+560 \mu ^3 u_xu_{xx}\\
&+1680 \mu ^2 uu_xu_{xx}+1680 \mu u^2u_xu_{xx}+182 u_xu_{xx}^2 +560 u^3u_xu_{xx}=0.
\end{aligned}\ee
%
%
Note, moreover that selecting $\mu=0$ in
\eqref{nthfocG}, we get the \emph{focusing} mKdV hierarchy of equations, defined by

\be\begin{aligned}\label{nthfocmkdv}
& u_{t} = - \frac{\partial}{\partial x}\Big(-i\frac{\partial}{\partial x} + 2u\Big)\mathcal L_{n}[iu_x + u^2],\quad n\in\N.\\
\end{aligned}\ee
\noindent

This recursion relation \eqref{nthfocmkdv} generates the whole mKdV hierarchy, recovering the one depicted in \cite{olver}-\cite{Mat1}. Many previous works have shown 
explicit solutions of hierarchies. For the mKdV hierarchy,  Matsuno \cite{Mat1} proved the existence and built 
explicitly the $N$-soliton solution. More recently, Gomes et al \cite{gomes} dealt with the
defocusing mKdV with non vanishing boundary conditions (NVBC) and the associated defocusing Gardner hierarchy, showing multisolitonic structures. Unfortunately, many of the zero boundary 
value solutions are singular.

\medskip

With respect to breather solutions, they are defined as localized in space and perodic in time 
(up to symmetries of the equation) functions. For instance, in the Gardner case, they are \emph{shortly} defined as follows:

\begin{defn}[Gardner breather]\label{5BreG}
Let $\al, \bt \in \R\backslash\{0\},~\mu\in \R^{+}\backslash\{0\}$ such that $\Delta=\al^2+\bt^2-4\mu^2>0$. A  breather solution $B_\mu$ 
of the \emph{classical} Gardner equation \eqref{examplesG} is given  by the formula
\be\label{breatherintro}B_\mu\equiv B_{\al, \bt,\mu}(t,x;x_1,x_2)   
 :=   2\partial_x\Bigg[\arctan\Big(\frac{\tfrac{\bt\sqrt{\al^2+\bt^2}}{\al\sqrt{\Delta}}\sin(\al y_1) - h_1(t,x)}{\cosh(\bt y_2)-h_2(t,x)}\Big)\Bigg],
\ee
where $h_1,h_2$ are precise trigonometric and hyperbolic functions.
\end{defn}

See Theorem \ref{nthfocG} for a detailed and complete definition. Breathers were previously studied  for  mKdV \cite{AM,AM1},
Gardner \cite{AM11}, sine-Gordon \cite{AMP1}  and NLS \cite{AMF} equations and also in many other nonlinear models (see 
\cite{KKS1,KKS2,jesus1,jesus2,jesus3}). In some of these works, breather solutions are 
indeed characterized as solutions of a precise fourth order ODE (mKdV and Gardner) or a system of ODEs (sine-Gordon), and indeed 
being defined as local minimizers of suitable Lyapunov functionals built as linear combinations of conserved quantities up to $H^2$ level. 
In the case of higher order equations, some stability results were proved for breather solutions  of the 5th, 7th and 9th mKdV equations in  \cite{AleCar} and also for the breather solution of the 5th Gardner equation, once the global in time behavior 
of its solutions was well understood \cite{AleCk,AleCar2}. However, with respect to Gardner or mKdV hierarchies and as far as we know, no real regular breather 
solutions were shown or mentioned explicitly in the literature \cite{Mat,gomes}.

%
%


\medskip
In this work we present breather solutions for the whole Gardner and mKdV hierarchies \eqref{nthfocG}-\eqref{nthfocmkdv}, and we will show that in fact they share the 
same functional profile for the whole hierarchy, up to corresponding speed parameters, 
which will depend on the level of the hierarchy considered (see \eqref{nthbreatherG} for a detailed definition). This is for us a nonexpected and surprising \emph{universality property}. Even more, in the periodic mKdV setting, we obtain 
a detailed description of \emph{periodic} breather solutions of the corresponding 5th and 7th order mKdV equations (see Section \ref{periodicB}). We believe in 
fact that a similar complete description of periodic breather solutions for the whole mKdV and Gardner hierarchies is feasible.

\medskip

Also the main aim of this work is to show that all breather solutions of Gardner and mKdV hierarchies satisfy a \emph{universal} fourth order ODE, which it is the same for
any breather solution of the corresponding equation member of the considered hierarchy. This universality expands the variational characterization of these 
breather solutions, meaning that for any member of the Gardner and mKdV hierarchies, a breather solution is a critical point of a precise Lyapunov functional 
defined in the Sobolev space $H^2$.

\medskip

Therefore, the goal of this work is twofold: firstly we are going to define and to characterize variationally regular breather and N-soliton solutions of the  Gardner
and mKdV  hierarchies of equations. Secondly, the most important result will be to show that
any higher order Gardner (mKdV) breather solution of the Gardner (mKdV) hierarchy of equations \eqref{nthfocG}-\eqref{nthfocmkdv},
satisfies the \emph{same} fourth-order stationary elliptic ODE and it is a critical point of a Lyapunov functional defined in $H^2$. That means that the  breather solution of the corresponding equation member of the
Gardner (mKdV) hierarchy holds a fourth order ODE, which it is the same for all breather solutions of any  higher order Gardner (mKdV) equation of the
Gardner  (mKdV) hierarchy, and therefore the same universal ODE independently of the level of the Gardner (mKdV) hierarchy considered. 
Using these higher order breather solutions of the Gardner hierarchy, we will prove the ill-posedness of the Gardner hierarchy, determining 
the critical Sobolev index depending on the level of the hierarchy. Finally, and for the shake of completeness, we will provide a complete description of
periodic breather solutions of 5th and 7th order mKdV equations.

\medskip

In short, we list our main results as follows:


\begin{thm}[Existence of Gardner hierarchy breathers]\label{MTG0}  For all $n\in\N$ there exists a breather solution of the corresponding $(2n+1)$-th order Gardner equation of the Gardner hierarchy
 \eqref{nthfocG}. More precisely
 
 \begin{enumerate}
  \item \emph{~Structure}:~~ all these breather solutions have the same functional structure, namely given 
 $\al, \bt\in \R\backslash\{0\}$ and $\mu \in \R^+\backslash\{0\}$ 
 such that $\Delta=\al^2+\bt^2-4\mu^2>0$,  and $x_1,x_2\in \R$ we have
 \be\label{nthbreatherG}
  B_\mu:=
  2\partial_x\left[\arctan\left(
  \frac{\tfrac{\bt\sqrt{\al^2+\bt^2}}{\al\sqrt{\Delta}}\sin(\al y_1) -\tfrac{2\mu\bt[\cosh(\bt y_2)+\sinh(\bt y_2)]}{\Delta}}
  {\cosh(\bt y_2)-\tfrac{2\mu\bt[\al\cos(\al y_1)-\bt\sin(\al y_1)]}{\al\sqrt{\al^2+\bt^2}\sqrt{\Delta}}}\right)\right],
 \ee
\noindent with $y_1 = x+ \delta_{2n+1,\mu} t + x_1$ and $y_2 = x+ \ga_{2n+1,\mu} t +x_2$.
\medskip
  \item \emph{~Velocities}:~~ The velocities $\ga_{2n+1,\mu}$ and $\delta_{2n+1,\mu}$ are the only parameters depending on the level of the hierarchy considered, in the following explicit form:
 \[\begin{aligned}
&\ga_{2n+1,\mu} := -\frac{1}{\bt} \re\Big[\sum_{p=1}^{n}a_{p,n}(\bt + i\al)^{2p+1}\mu^{2(n-p)}\Big],\\
&\delta_{2n+1,\mu} := -\frac{1}{\al} \ima\Big[\sum_{p=1}^{n}a_{p,n}(\bt + i\al)^{2p+1}\mu^{2(n-p)}\Big],\\
\end{aligned}\]
\noindent%
where
\[\begin{aligned}
&a_{p,n}~\equiv~\text{coefficient of the term}~~ \mu^{2(n-p)}u_{(2p+1)x} \\
&~\text{in  the (2n+1)th-order Gardner equation}~~ \eqref{nthfocG}.
\end{aligned}\]
  \item \emph{~ Smoothness}:~~ Each breather solution of the Gardner hierarchy above presented is smooth in time and space, 
  and belong to the Schwartz class in space.
  \smallskip
  \item \emph{~Convergence to mKdV}: For all $n\in\N$, these breather solutions of the Gardner hierarchy reduce to 
 breather solutions of the mKdV hierarchy \eqref{nthfocmkdv} as $\mu\to0$, namely
 \be\label{intronthBresmkdv}B\equiv B_{\al, \bt,n}(t,x;x_1,x_2)   
 :=   2\partial_x\Bigg[\arctan\Big(\frac{\bt}{\al}\frac{\sin(\al y_1)}{\cosh(\bt y_2)}\Big)\Bigg],
\ee
\noindent
with $y_1$ and $y_2$
\be\label{introy1y2mkdv}
\begin{aligned}
&y_1 = x+ \delta_{2n+1} t + x_1, \qquad y_2 = x+ \ga_{2n+1} t +x_2,~~~\end{aligned}\ee
\noindent and with velocities
\[
\delta_{2n+1} := -\frac{1}{\al} \ima\Big[(\bt + i\al)^{2n+1}\Big],\qquad  \ga_{2n+1} := -\frac{1}{\bt} \re\Big[(\bt + i\al)^{2n+1}\Big].
\]
 \end{enumerate}
\end{thm}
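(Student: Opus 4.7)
My plan is to prove the theorem by direct substitution of the proposed ansatz into \eqref{nthfocG}, using the recursive structure of the Lenard operators and a dispersion-relation argument to pin down the velocities. Because the spatial profile of $B_\mu$ is independent of $n$ and only the phases $y_1, y_2$ carry time, the time derivative collapses to $\pt B_\mu = \de_{2n+1,\mu}\,\pd_{y_1} B_\mu + \ga_{2n+1,\mu}\,\pd_{y_2} B_\mu$, and the whole problem reduces to matching two scalar coefficients.

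The first step is to show that for the fixed spatial profile \eqref{nthbreatherG}, the right-hand side of \eqref{nthfocG} is again a linear combination of $\pd_{y_1} B_\mu$ and $\pd_{y_2} B_\mu$. The cleanest route is via the complex Miura transformation $w := i u_x + (\mu+u)^2$: on the ansatz $u=B_\mu$, the function $w$ turns out to be a meromorphic combination whose singularities are a pair of complex-conjugate poles at $k_\pm = \bt \pm i\al$ (this is the standard IST picture of a breather as two complex-conjugate spectral points on a background $\mu^2$). Because the Miura map intertwines the focusing Gardner hierarchy with the KdV hierarchy $w_t = \pd_x \mathcal L_{n+1}[w]$, evaluating $\mathcal L_n[w]$ on this ansatz produces, after differentiation in $x$ and multiplication by $-\pd_x(-i\pd_x + 2(\mu+u))$, exactly the linear combination of $\pd_{y_1}B_\mu, \pd_{y_2}B_\mu$ we need.

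To identify the velocities I would linearise at the background $u\equiv 0$ (so $w\equiv \mu^2$). Each Fourier mode $e^{ikx}$ evolves under the $(2n+1)$-th hierarchy equation with dispersion
\[
\omega_{2n+1,\mu}(k) \;=\; \sum_{p=0}^{n} a_{p,n}\, \mu^{2(n-p)} (ik)^{2p+1},
\]
since $a_{p,n}$ is by definition the coefficient of $\mu^{2(n-p)} u_{(2p+1)x}$ in \eqref{nthfocG}. Plugging in the breather's complex spectral value $k = \bt + i\al$ and taking real and imaginary parts yields precisely the formulas claimed for $\ga_{2n+1,\mu}$ and $\de_{2n+1,\mu}$. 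One then checks, using the Lenard recursion \eqref{lenard} together with the two key identities $w-\mu^2 = 2\mu u + u^2 + iu_x$ and the bilinear Hirota form of $B_\mu$, that this linear-dispersion prediction extends to the full nonlinear equation — this is where the integrable structure is essential. I would carry this out inductively on $n$: assume the ansatz solves the $n$-th member with the predicted velocities, then apply \eqref{lenard} to step to level $n+1$, and verify that the extra operator $\pd_x^3 + 4w\pd_x + 2w_x$ acts on $\mathcal L_n[w]|_{B_\mu}$ as multiplication by $(\bt+i\al)^2$ up to lower-order terms absorbed by the recursion on the $a_{p,n}$.

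Smoothness and the Schwartz class follow from the hypothesis $\Delta > 0$, which forces the denominator $\cosh(\bt y_2) - \frac{2\mu\bt[\al\cos(\al y_1)-\bt\sin(\al y_1)]}{\al\sqrt{\al^2+\bt^2}\sqrt{\Delta}}$ to be bounded below by $1 - \frac{2\mu}{\sqrt{\Delta}}\cdot\frac{\bt}{\sqrt{\al^2+\bt^2}} > 0$, so $B_\mu\in C^\infty(\R_t\times\R_x)$ and its $x$-decay is exponential of rate $\bt$. Convergence to the mKdV hierarchy breather \eqref{intronthBresmkdv} as $\mu\to 0$ is immediate from inspection of \eqref{nthbreatherG} (the two $\mu$-dependent shifts vanish) together with the fact that $a_{n,n}=1$, so $\ga_{2n+1,\mu}\to -\tfrac{1}{\bt}\re[(\bt+i\al)^{2n+1}]$ and $\de_{2n+1,\mu}\to -\tfrac{1}{\al}\ima[(\bt+i\al)^{2n+1}]$.

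The main obstacle is the inductive verification of the nonlinear step. The Lenard recursion produces $\mathcal L_{n+1}$ from $\mathcal L_n$ by applying a second-order differential operator that depends on $w$, and after one Miura push-forward onto $B_\mu$ the intermediate expressions are unwieldy. Handling this cleanly requires exploiting that, on the breather ansatz, $w - \mu^2$ factors through the Hirota tau-quotient, so the operator $\pd_x^3 + 4w\pd_x + 2w_x$ acts diagonally with eigenvalues $(\bt\pm i\al)^2$ on the two spectral components of $B_\mu$. I expect this to be the delicate computational core of the proof, and it is also the reason the velocities organise into the compact complex-power sum stated in the theorem.
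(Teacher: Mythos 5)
Your route is genuinely different from the paper's. The paper works entirely inside Hirota's bilinear formalism: writing $B_\mu = 2\px\arctan(G_\mu/F_\mu)$, it isolates the $n$-independent constraint \eqref{B2} (equivalently the first bilinear equation \eqref{Bil_GEa}, $D_x^2(\mathbf{G}\mathbf{F})-2i\mu D_x(\mathbf{G}\mathbf{F})=0$, which is common to every member of the hierarchy), verifies that the profile \eqref{nthbreatherG} satisfies it by an explicit symbolic computation, and then fixes the velocities by direct substitution into the 5th and 7th order equations \eqref{examplesG5}--\eqref{examplesG7} followed by an induction on $n$. Your dispersion-relation derivation of $(\ga_{2n+1,\mu},\de_{2n+1,\mu})$ --- evaluating $\omega(k)=\sum_p a_{p,n}\mu^{2(n-p)}(ik)^{2p+1}$ at the complex wavenumber $k=\bt+i\al$ and splitting into real and imaginary parts --- is correct, reproduces the paper's formulas, and is arguably a cleaner and more conceptual way to obtain item (2) than the paper's brute-force matching: it explains \emph{why} the speeds organise into the complex-power sums.

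There is, however, a genuine gap at the centre of your argument, and you acknowledge it yourself. The entire content of item (1) for general $n$ is that the fixed profile solves the full nonlinear equation at every level of the hierarchy, and in your plan this rests on the assertion that, on the Miura image $w=iB_{\mu,x}+(\mu+B_\mu)^2$, the Lenard step operator $\px^3+4w\px+2w_x$ of \eqref{lenard} ``acts diagonally with eigenvalues $(\bt\pm i\al)^2$ on the two spectral components, up to lower-order terms absorbed by the recursion on the $a_{p,n}$.'' That is precisely the statement requiring proof; as written it is an inverse-scattering heuristic, not an argument --- you never identify what the ``two spectral components'' of the breather are as objects on which this operator acts, nor what ``lower-order terms'' means, and you explicitly defer this as the ``delicate computational core.'' The paper sidesteps this by trading the inductive step for the single $n$-independent identity \eqref{B2}, which it actually verifies (its treatment of the time evolution is likewise only explicit for $n=2,3$ plus an asserted induction, so it is not airtight either, but it does pin down the spatial constraint for all $n$ at once, which your plan does not). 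Separately, your smoothness argument is off on two counts: regularity of $2\px\arctan(G_\mu/F_\mu)$ requires $F_\mu^2+G_\mu^2>0$, not a lower bound on the denominator $F_\mu$ alone; and the bound you state is miscomputed --- the relevant amplitude is $a_3\sqrt{\al^2+\bt^2}=2\mu\bt/(\al\sqrt{\De})$, and its being less than $1$ does not follow from $\De>0$ (it would additionally require $\al>2\mu$).
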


\begin{rem}
As far as we know, this is the first example of breather solutions of the mKdV and Gardner hierarchies.
\end{rem}
%

Our second result is a characterization of each new Gardner breather as solution of a nonlinear fourth order ODE. Note that this ODE does not depend on the 
order of the hierarchy. Furthermore, any  breather solution  of the hierarchy is a critical point of the same Lyapunov functional. 

\begin{thm}[Universal nonlinear ODE and variational characterization]\label{introMTG0}  Any breather solution $B_\mu$  of the $(2n+1)$-th order 
Gardner equation \eqref{nthfocG} satisfies, for any $n\in\N$, the \emph{same} fourth order elliptic equation
\be\label{introMTG0a}
\begin{aligned}
& B_{\mu,4x} + 2 (\al^2-\bt^2)(B_{\mu,xx} + 6\mu B_\mu^2 + 2B_\mu^3) + (\al^2+\bt^2)^2 B_\mu  \\
& \quad + 10B_\mu^2 B_{\mu,xx} + 10B_\mu B_{\mu,x}^2 + 6B_\mu^5 + 10\mu B_{\mu,x}^2 \\ 
& \quad + 20\mu B_\mu B_{\mu,xx} + 40\mu^2B_\mu^3 + 30\mu B_\mu^4 =0.
\end{aligned}
\ee
\noindent
Moreover, for any $n\in\N$, the breather solution $B_\mu$ of the Gardner hierarchy \eqref{nthfocG} is a critical point of a universal Lyapunov functional
$\mathcal H_\mu$ \eqref{LyapunovGE}, which 
is written as linear combination of three conserved laws in the following way
\be\label{Hmu}
 \mathcal H_\mu[u(t)] := F_\mu[u](t) + 2(\bt^2-\al^2)E_\mu[u](t) + (\al^2 +\bt^2)^2 M[u](t).
\ee
\end{thm}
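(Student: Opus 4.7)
The plan is to separate the two claims. For the universal ODE, the key observation is that \eqref{introMTG0a} contains only spatial derivatives and no explicit $t$. Since the breather \eqref{nthbreatherG} depends on $(t,x)$ only through $y_1=x+\delta_{2n+1,\mu}t+x_1$ and $y_2=x+\gamma_{2n+1,\mu}t+x_2$, and $n$ enters \eqref{nthbreatherG} exclusively via the velocities $\gamma_{2n+1,\mu}$ and $\delta_{2n+1,\mu}$, at any fixed time $t$ the spatial profile $x\mapsto B_\mu(t,x)$ is a member of the same two-parameter family $\{x\mapsto B_\mu(x;\tilde{x}_1,\tilde{x}_2)\}_{\tilde{x}_1,\tilde{x}_2\in\R}$, independently of $n$. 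Hence it is enough to verify \eqref{introMTG0a} at the lowest level $n=1$, and the validity for arbitrary $n$ follows by choosing $x_1,x_2$ to absorb the $t$-shift.

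For $n=1$ (the classical Gardner equation \eqref{examplesG}), the breather coincides with \eqref{breatherintro} and, as already established in \cite{AM11}, satisfies a fourth-order stationary elliptic equation of exactly the same structural form as \eqref{introMTG0a}; matching coefficients in the spectral parameters $\al,\bt,\mu$ finishes the verification. A self-contained alternative is to substitute \eqref{nthbreatherG} directly into \eqref{introMTG0a} and simplify using $\sin^2+\cos^2=1$, $\cosh^2-\sinh^2=1$ and $\Delta=\al^2+\bt^2-4\mu^2$; the computation is lengthy but mechanical.

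For the variational statement, I would identify three $H^2$-conserved quantities of the Gardner hierarchy: the mass $M[u]=\tfrac12\int u^2\,dx$, the energy $E_\mu[u]=\int\bigl(\tfrac12 u_x^2-2\mu u^3-\tfrac12 u^4\bigr)\,dx$, and the next conserved quantity $F_\mu[u]$, whose density can be read off from the Lenard recursion \eqref{lenard}. Direct computation of the $L^2$-gradients gives $M'[u]=u$, $E_\mu'[u]=-u_{xx}-6\mu u^2-2u^3$, and
\begin{equation*}
\begin{aligned}
F_\mu'[u] &= u_{4x}+10u^2u_{xx}+10uu_x^2+6u^5\\
&\quad +10\mu u_x^2+20\mu uu_{xx}+40\mu^2 u^3+30\mu u^4.
\end{aligned}
\end{equation*}
Forming the linear combination \eqref{Hmu}, the Euler--Lagrange equation $\mathcal H_\mu'[u]=0$ coincides term by term with \eqref{introMTG0a}. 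Combined with the first part of the theorem, this proves that $B_\mu$ is a critical point of $\mathcal H_\mu$ in $H^2$.

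The main obstacle is the identification and careful bookkeeping of the third conservation law $F_\mu$: pinning down the correct conserved density---with the right signs, integer coefficients and $\mu$-dependent cubic, quartic and quintic corrections---so that its $L^2$-gradient reproduces the full nonlinear tail of \eqref{introMTG0a}. Although the existence and general shape of $F_\mu$ is dictated by the Lenard structure of the hierarchy, the interaction between its $\mu$-free and $\mu$-dependent pieces must be tracked with care, especially to ensure that the quintic term $6u^5$ and the coefficients $10,10,20\mu,30\mu,40\mu^2$ emerge exactly. Once $F_\mu$ is correctly isolated, the remaining assembly of the proof is routine.
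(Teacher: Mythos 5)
Your proposal is correct, and the two halves compare differently with the paper. For the variational half you follow essentially the paper's route: you compute the $L^2$-gradients of $M$, $E_\mu$ and $F_\mu$ (your expression for $F_\mu'[u]$ agrees term by term with what one gets from the density in \eqref{E2} after integration by parts) and observe that the Euler--Lagrange equation of \eqref{Hmu} is literally the left-hand side of \eqref{introMTG0a}; the paper does the same by expanding $\mathcal H_\mu[B_\mu+z]$ and isolating the linear term $\int_\R G_\mu(B_\mu)z$, which vanishes by the first part. For the ODE half your argument is genuinely different and more structural. The paper substitutes $B_\mu=H/N$ into the ODE, reduces the left-hand side to $(R_1+R_2+R_3)/N^5$ with $H,N$ and their derivatives listed in Appendix \ref{NotacionTheo}, and checks $R_1+R_2+R_3=0$ with \emph{Mathematica} --- a computation that is independent of $n$ only because the velocities never enter it. You instead make that $n$-independence the argument: since \eqref{introMTG0a} is autonomous and purely spatial, and $n$ enters \eqref{nthbreatherG} only through $\delta_{2n+1,\mu}$ and $\ga_{2n+1,\mu}$, the spatial profile of the $(2n+1)$-th breather at time $t$ is the time-zero profile of the classical Gardner breather with phases $x_1+\delta_{2n+1,\mu}t$ and $x_2+\ga_{2n+1,\mu}t$, so the whole hierarchy reduces to the $n=1$ case established in \cite{AM11}. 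This isolates exactly why the ODE is universal and spares the symbolic verification, at the cost of leaning on the external reference (your direct-substitution fallback is then precisely the paper's computation). Both routes are sound; the only caveat is that, like the paper, you take the conservation of $M$, $E_\mu$, $F_\mu$ along the whole hierarchy on faith from integrability rather than verifying it, though this is not needed for the critical-point claim itself.
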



\begin{rem}
The conserved functionals in \eqref{Hmu} can be explicitly found in \eqref{Mass}, \eqref{E1N} and \eqref{E2}. 
\end{rem}

\begin{rem}
This nonlinear ODE was already found in the case of the classical Gardner equation in \cite{AM11}. See also \cite{AM} for a first proof in the mKdV case. 
The surprise here is that every breather member of the hierarchy is solution of the same nonlinear ODE.
\end{rem}

\begin{rem}[About the stability of the hierarchy Gardner breathers]
Note that with this variational characterization for breather solutions of the whole Gardner hierarchy, and applying the same ideas pointed out in 
\cite{AM11} (see also \cite{AM}), a suitable {\bf stability result} for breather solutions of the Gardner hierarchy \eqref{nthfocG} can be presented, provided a well-posedness theory is 
available, \emph{which is not the case today}. Currently, only the stability result for breather solutions of the 5th order Gardner equation has been proved, see \cite{AleCk}. 
The well-posedness of the remaining members of the hierarchy is an interesting open problem. Compare with Theorem \ref{ill2n1th0} below, which shows weak ill-posedness depending on the index $n$ of the member. 
\end{rem}

\medskip


%
%
%

As a direct corollary of the main result Theorem  \ref{MTG0}, we  get, when $\mu=0$ in \eqref{introMTG0a}, the fourth-order  elliptic ODE
satisfied by all breather solutions of the mKdV hierarchy.

\begin{cor}[Universality for mKdV hierarchy breathers]\label{aa4thodemkdv0}  Any breather solution $B$ of the $(2n+1)$-th order mKdV equation \eqref{nthfocmkdv}
satisfies for any $n\in\N$ the \emph{same} fourth order  elliptic equation
\be\label{aa4thodemkdv0a}
\begin{aligned}
& B_{4x} + 2 (\al^2-\bt^2)(B_{xx} + 2B^3) + (\al^2+\bt^2)^2 B \\
&\qquad + 10B^2B_{xx}+ 10BB_x^2 + 6B^5 =0.
\end{aligned}
\ee
Moreover, mKdV breathers of the hierarchy are classical critical point of an associated functional, just as in \ref{Hmu}, but with $\mu=0.$
\end{cor}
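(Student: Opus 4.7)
My plan is to deduce the corollary from Theorem \ref{introMTG0} by a clean $\mu\to 0^+$ limiting argument, noting that the mKdV hierarchy \eqref{nthfocmkdv} is exactly the $\mu=0$ specialization of the Gardner hierarchy \eqref{nthfocG}, so nothing new has to be proved about the equation itself; everything reduces to checking the limit of the breather profile and the limit of the universal ODE and functional.

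First, I would make the convergence in part (4) of Theorem \ref{MTG0} quantitative in the right sense. The explicit formula \eqref{nthbreatherG} depends polynomially on $\mu$ inside a smooth arctan, and one checks that setting $\mu=0$ in \eqref{nthbreatherG} reproduces \eqref{intronthBresmkdv} provided the velocities $\delta_{2n+1,\mu},\gamma_{2n+1,\mu}$ specialize to $\delta_{2n+1},\gamma_{2n+1}$. For the velocities, in
\[
\gamma_{2n+1,\mu} = -\frac{1}{\bt}\re\Big[\sum_{p=1}^{n} a_{p,n}(\bt+i\al)^{2p+1}\mu^{2(n-p)}\Big],
\]
only the term $p=n$ survives as $\mu\to 0$, and by construction $a_{n,n}$ is the coefficient of $u_{(2n+1)x}$ in \eqref{nthfocG} at $\mu=0$, which equals $1$ (the pure highest-derivative monomial in $\mathcal L_n$). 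Hence $\gamma_{2n+1,\mu}\to \gamma_{2n+1}$, and symmetrically $\delta_{2n+1,\mu}\to\delta_{2n+1}$. Consequently $B_\mu$ together with all its spatial derivatives up to any fixed order converges, uniformly on compact sets in $(t,x)$, to the mKdV breather $B$ in \eqref{intronthBresmkdv}.

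Second, I would pass to the limit in the universal ODE \eqref{introMTG0a}. The left-hand side is a polynomial in $\mu, B_\mu, B_{\mu,x},B_{\mu,xx}, B_{\mu,4x}$ with continuous (in fact polynomial) dependence on $\mu$. By Theorem \ref{introMTG0} it vanishes identically for every admissible $\mu>0$. Letting $\mu\to 0^+$, the $\mu$--dependent monomials
\[
12\mu B_\mu^{2},\ 20\mu B_\mu B_{\mu,xx},\ 10\mu B_{\mu,x}^{2},\ 40\mu^{2}B_\mu^{3},\ 30\mu B_\mu^{4}
\]
disappear, and what remains is exactly \eqref{aa4thodemkdv0a} with $B_\mu$ replaced by its limit $B$. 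The uniform smooth convergence above justifies this termwise passage to the limit, and proves the ODE part.

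For the variational statement, the conserved functionals $M$, $E_\mu$, $F_\mu$ referenced in \eqref{Hmu} are polynomials in $\mu$ whose coefficients are $\mu$--independent densities in $u$ and its derivatives (cf. \eqref{Mass}, \eqref{E1N}, \eqref{E2}); in particular $E_0$ and $F_0$ are the standard mKdV energy and $H^{2}$--level conservation law, and $\mathcal H_\mu|_{\mu=0}$ is the Lyapunov functional whose Euler--Lagrange equation is precisely \eqref{aa4thodemkdv0a}. Since $B_\mu$ is a critical point of $\mathcal H_\mu$ for every $\mu>0$ (this is the content of Theorem \ref{introMTG0}), passing to the limit yields $\mathcal H_0'[B]=0$, i.e.\ $B$ is a critical point of the $\mu=0$ functional. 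The only step requiring any care is verifying that the $\mu\to 0^+$ limit commutes with taking derivatives up to fourth order in the explicit breather formula; this is the main ``obstacle,'' but it is essentially routine because the denominator $\cosh(\bt y_2)-\tfrac{2\mu\bt[\al\cos(\al y_1)-\bt\sin(\al y_1)]}{\al\sqrt{\al^2+\bt^2}\sqrt{\De}}$ is bounded away from zero uniformly for small $\mu$ and $(t,x)$ in any compact set.
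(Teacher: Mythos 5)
Your argument is correct, but it takes a genuinely different route from the paper. The paper's proof of Corollary \ref{aa4thodemkdv0} is a one-line direct specialization: it simply reruns the proof of Theorem \ref{introMTG0} with $\mu=0$, since the bilinear identities and the symbolic verification $R_1+R_2+R_3=0$ are polynomial identities in $\mu$ that remain valid at $\mu=0$. You instead keep $\mu>0$, invoke Theorem \ref{introMTG0} as a black box, and pass to the limit $\mu\to 0^+$ using part (4) of Theorem \ref{MTG0}, the identification $a_{n,n}=1$ for the velocities, and uniform convergence of $B_\mu$ and its derivatives on compact sets (which holds because $\Delta\to\al^2+\bt^2>0$ and the relevant denominator $F_\mu^2+G_\mu^2$ stays bounded below, so every mKdV breather with admissible $\al,\bt$ is reached in the limit). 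Both arguments are sound. The paper's version is shorter and needs no convergence analysis but implicitly re-does the $\mu=0$ computation; yours avoids any new computation at $\mu=0$ at the cost of the (routine but necessary) justification that differentiation up to fourth order commutes with the limit. One cosmetic slip: the vanishing monomial you write as $12\mu B_\mu^2$ should carry the prefactor $(\al^2-\bt^2)$ coming from the term $2(\al^2-\bt^2)\cdot 6\mu B_\mu^2$ in \eqref{introMTG0a}; this does not affect the argument.
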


This corollary complements and completes the main result in \cite[Th.2.4]{AleCar}, which essentially considered the 5th, 7th and 9th mKdV equations only. 

\medskip

Our third result is related to well-posedness issues. Indeed, following the key idea introduced by Kenig, Ponce and Vega \cite{KPV2} on NLS and mKdV, and generalized 
to the classical Gardner equation in \cite{Ale1} and the 5th order Gardner equation in \cite{AleCar2}, we are able to study these higher order breather solutions, and show ill-posedness for the whole hierarchy in the following sense: 

\begin{thm}[Growing ill-posedness for the Gardner hierarchy]\label{ill2n1th0}
If $s<\frac{2n-1}{4}$, the mapping data-solution $u_0\rightarrow u(t)$, with $u(t)$ a solution of the IVP for the
(2n+1)th-order Gardner equation \eqref{nthfocG} is not uniformly continuous.
\end{thm}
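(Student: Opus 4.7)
The argument follows the Kenig--Ponce--Vega scheme \cite{KPV2}, as developed for lower-order Gardner equations in \cite{Ale1,AleCar2}, with the hierarchy breathers of Theorem \ref{MTG0} as explicit building blocks. I would construct two sequences of breather initial data
\begin{equation*}
u_{0,N}^{(i)} := B_\mu\big(0,\cdot;\alpha_N+(i-1)\epsilon_N, \beta_N, \mu\big), \qquad i=1,2,
\end{equation*}
with $\alpha_N\to\infty$, $\beta_N:=\alpha_N^{-2s}$ and $\epsilon_N\ll\beta_N$ to be fixed. Combining the smoothness of the breather (Theorem \ref{MTG0}(iii)) with its Fourier concentration at wavenumbers $\pm\alpha_N$ of spectral width $O(\beta_N)$, one shows $\|B_\mu(0,\cdot;\alpha,\beta,\mu)\|_{H^s}\sim \alpha^s\sqrt{\beta}$, so the choice $\beta_N=\alpha_N^{-2s}$ makes $\{u_{0,N}^{(i)}\}$ uniformly bounded in $H^s$. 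A first-order Taylor expansion in $\alpha$ then gives
\begin{equation*}
\|u_{0,N}^{(1)} - u_{0,N}^{(2)}\|_{H^s} \sim \epsilon_N\,\alpha_N^{s}/\sqrt{\beta_N} = \epsilon_N\alpha_N^{2s},
\end{equation*}
which tends to zero as soon as $\epsilon_N \ll \alpha_N^{-2s}$.

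For the dynamics, Theorem \ref{MTG0}(ii) yields $\gamma_{2n+1,\mu}(\alpha)\sim -(2n+1)a_{n,n}(-1)^n\alpha^{2n}$ for $\alpha$ large, and hence a relative envelope drift
\begin{equation*}
|\gamma_{2n+1,\mu}(\alpha_N+\epsilon_N)-\gamma_{2n+1,\mu}(\alpha_N)|\sim \epsilon_N\alpha_N^{2n-1}.
\end{equation*}
A crucial observation is that the fast carrier phases $\alpha y_1$ of the two breathers coincide at their envelope centers to first order in $\epsilon_N$, because an explicit expansion of the formulae in Theorem \ref{MTG0}(ii) shows
\begin{equation*}
\frac{d}{d\alpha}\big(\alpha\,\delta_{2n+1,\mu}(\alpha)\big) = \gamma_{2n+1,\mu}(\alpha) + O\big(\beta^2\alpha^{2n-2}\big),
\end{equation*}
and the correction is subleading in the regime $\beta_N\ll\alpha_N$. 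Consequently, the genuine $H^s$-decorrelation mechanism between $u_N^{(1)}$ and $u_N^{(2)}$ is the envelope displacement, not the carrier dephasing. At the envelope-separation time
\begin{equation*}
t_N \sim \frac{1}{\epsilon_N\,\alpha_N^{2n-1}\beta_N} = \frac{\alpha_N^{2s+1-2n}}{\epsilon_N},
\end{equation*}
the two envelopes are essentially disjoint, so that $\|u_N^{(1)}(t_N)-u_N^{(2)}(t_N)\|_{H^s}^2 \gtrsim 2\alpha_N^{2s}\beta_N \sim 1$.

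Imposing $t_N\to 0$ forces $\epsilon_N\gg\alpha_N^{2s+1-2n}$, which combined with $\epsilon_N\ll\alpha_N^{-2s}$ is jointly solvable if and only if $4s<2n-1$, i.e., $s<(2n-1)/4$; any $\epsilon_N = \alpha_N^{-\theta}$ with $2s<\theta<2n-2s-1$ produces a valid choice. The main technical obstacle lies in upgrading the above heuristic estimates to rigorous $H^s$-bounds in the degenerate regime $\alpha_N\to\infty,\,\beta_N\to 0$: one must (i) control the sub-leading corrections in the breather formula \eqref{nthbreatherG} coming from the $\arctan$ and from the $\mu$-dependent terms, (ii) quantify the error in the identity $d(\alpha\delta_{2n+1,\mu})/d\alpha = \gamma_{2n+1,\mu}$ so that carrier dephasing remains genuinely subleading to envelope separation uniformly in $N$, and (iii) turn the heuristic disjointness of the envelopes at $t_N$ into the rigorous lower bound $\|u_N^{(1)}(t_N)-u_N^{(2)}(t_N)\|_{H^s}\gtrsim 1$.
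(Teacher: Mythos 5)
Your plan is essentially the paper's own proof: the same Kenig--Ponce--Vega scheme applied to the hierarchy breathers in the regime $\beta/\alpha\ll 1$, where $B\approx\sqrt{2}\,\operatorname{Re}[e^{i\alpha(x+\delta_{2n+1}t)}Q_\beta(x+\gamma_{2n+1}t)]$, with the same scaling $\beta=\alpha^{-2s}$, the same Fourier-side bound $\|u_0^{(1)}-u_0^{(2)}\|_{H^s}\lesssim \alpha^{2s}|\alpha_1-\alpha_2|$, the same envelope-separation mechanism driven by $\gamma_{2n+1,\mu}\sim(2n+1)\alpha^{2n}$, and the same final arithmetic giving $s<\tfrac{2n-1}{4}$. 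The only differences are presentational: the paper fixes $T$ and $\delta$, takes $\alpha_1-\alpha_2=\delta\alpha^{-2s}$ and sends $\alpha\to\infty$ rather than using sequences with $t_N\to 0$, and it does not need your carrier-dephasing discussion, since the lower bound at time $T$ rests only on the (essential) disjointness of the two envelopes in physical space.
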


\begin{rem}
For the sake of completeness, we write explicitly the critical threshold index $s_c(n)$ for well-posedness, according to Theorem \ref{ill2n1th0}: for $n=1$, $s_c(1)=\frac14$ (classical Gardner), for $n=2$, $s_c(2)=\frac34$ (5th. order, see \cite{AleCk}), for $n=3$, $s_c(3)=\frac54>1$  (7th. order, above standard energy space $H^1$), for $n=4$, $s_c(4)=\frac74<2$ (9th order), and  $s_c(5)=\frac94>2$ (11th order, above $H^2$ energy space), and so on.
\end{rem}

Note that the larger is $n$, the higher is the Sobolev index $s$ for which we have ill-posedness below that regularity. 
Additionally, for $n>4$ we have ill-posedness above $H^2$, the natural space for breathers stability. 
Consequently, we cannot expect variational stability of breathers for members of the hierarchy of order $2n+1=11$ or higher.

\medskip

Also, a completely similar ill-posedness result can be proved for the mKdV hierarchy. See Corollary \ref{ill2n1thmkdv}. The Sobolev index inequality $s<\frac{2n-1}{4}$ is the same. 

\medskip

Finally, our last result considers the mKdV hierarchy in the periodic setting. 
We have obtained \emph{periodic}  breather solutions of higher order mKdV equations, as follows:

\begin{thm}[Existence of higher order periodic breathers]\label{introperiodicmkdv}
The 5th and 7th order mKdV equations have \emph{periodic} breather solutions, of the following form:
\be\label{intro57Bper}
B= B(t,x; \al, \bt, k, m, x_1,x_2)   :=  
2\partial_x \Big[ \arctan \Big( \frac{\bt}{\al}\frac{\sn(\al y_1,k)}{\nd(\bt y_2,m)}\Big) \Big],
\ee
where $\sn(\cdot ,k)$ and $\nd (\cdot , m)$ are the standard Jacobi elliptic functions of elliptic modulus $k$ and $m$, and
\be\label{introY1_Y2}
y_1:=x+\delta_{i,m} t + x_1,  \quad y_2:=x+\ga_{i,m} t +x_2,\quad i=5,7.
\ee
\end{thm}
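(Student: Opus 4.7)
The plan is to substitute the ansatz \eqref{intro57Bper} directly into the 5th and 7th order mKdV equations and to solve the resulting algebraic system for the velocity parameters $\de_{i,m}, \ga_{i,m}$, together with any necessary compatibility constraints on the moduli $k,m$. This is the periodic analogue of the strategy used in Theorem \ref{MTG0}(4), where $\sin(\al y_1)$ is replaced by $\sn(\al y_1,k)$ and $\sech(\bt y_2)$ by $\dn(\bt y_2,m)$ through $\nd = 1/\dn$; the degenerate limits $k\to 0$ and $m\to 1$ should recover the non-periodic breather \eqref{intronthBresmkdv} and its velocities, which furnishes a strong consistency check on the whole computation.

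First, I would compute $B$ in closed form as a rational function of the six Jacobi elliptic functions $\sn(\al y_1,k), \con(\al y_1,k), \dn(\al y_1,k)$ and $\sn(\bt y_2,m), \con(\bt y_2,m), \dn(\bt y_2,m)$. Using $\px\arctan F = F_x/(1+F^2)$ together with the elementary derivative rules $\sn' = \con\dn$, $\con' = -\sn\dn$, $\dn' = -k^2\sn\con$, and the quadratic identities $\sn^2+\con^2=1$, $\dn^2+k^2\sn^2=1$, each of the derivatives $B_x, B_{xx}, \ldots, B_{7x}$ can be reduced recursively to a polynomial in those six functions over a common denominator of the shape $\bigl(\al^2\nd^2(\bt y_2,m)+\bt^2\sn^2(\al y_1,k)\bigr)^N$.

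Next, I would insert $B$ and its derivatives into the $i$th-order mKdV equation for $i=5,7$. Since the time dependence is purely translational in $y_1$ and $y_2$, the velocities enter $B_t$ linearly. After clearing denominators and projecting onto the basis of linearly independent monomials $\sn^a\con^b\dn^c$ in each phase, one obtains a finite polynomial system in $\de_{i,m}, \ga_{i,m}, \al, \bt, k, m$, whose solution simultaneously determines the velocities and the admissible algebraic relations among the moduli and the wave numbers.

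The main obstacle will be the sheer algebraic complexity of the 7th-order case, whose mKdV equation carries roughly twenty nonlinear terms of up to degree seven in $B$; expanding each of them in the six Jacobi functions and collecting monomials is only realistically feasible with computer algebra. A subtler point is that, unlike the non-periodic case where the complex substitution $Y=\bt y_2+i\al y_1$ cleanly decouples the two phases, the moduli $k$ and $m$ need not be independent here: projection of the residual onto mixed monomials such as $\sn(\al y_1,k)\dn(\bt y_2,m)$ typically forces nontrivial algebraic relations between $(\al,k)$ and $(\bt,m)$, and it is in identifying and solving these constraints that the bulk of the technical work of the theorem resides.
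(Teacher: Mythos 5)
Your proposal is correct and follows essentially the same route as the paper: a direct substitution of the elliptic ansatz into the 5th and 7th order mKdV equations, with the velocities determined by matching coefficients (a computation the paper itself delegates to symbolic software, in the spirit of the Kevrekidis et al.\ construction). You also correctly anticipate that the moduli and wave numbers cannot be independent; the paper records precisely such constraints as the commensurability conditions $\bt^4/\al^4=k/(1-m)$ and $K(k)=\tfrac{\al}{2\bt}K(m)$ in Proposition \ref{DEF_B_per}.
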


\begin{rem}
See \eqref{perspeeds5} and \eqref{perspeeds7} in Section \ref{periodicB} for an explicit expression 
for velocities $(\delta_{5,m},\ga_{5,m})$ in the 5th order case and  $(\delta_{7,m},\ga_{7,m})$ in the 7th order respectively.
\end{rem}

\begin{rem}
See \cite{AMP2} for a detailed account on the elliptic functions involved in \eqref{intro57Bper}.
\end{rem}

The validity Theorem \ref{introperiodicmkdv} follows directly, after cumbersome computations, or the use of a standard symbolic software. We skip the details for the interested reader.

\medskip

We believe that these periodic breathers are completely new for the 5th and 7th order setting. 
In the classical mKdV periodic case, these solutions were found by Kevrekidis et al. \cite{KKS1,KKS2}. 
Also, in \cite{AMP2}, we presented stability properties of these solutions, which could be applied to these new breathers after some work. 
The variational structure of these solutions is an interesting open problem.

\medskip

We believe that the whole mKdV hierarchy has the same functional expression for periodic breather solutions, 
varying with velocities, but the form of higher order speeds than 5th and 7th order has escaped to us, and 
we were not able yet to obtain a complete description.

\subsection{Organization of this paper}
This paper is organized as follows: in Section \ref{1} we introduce the solitons for the Gardner hierarchy. In Section \ref{2} we prove existence of breathers, Theorem \ref{MTG0}. Section \ref{Sect:4} deals with the proof of the variational characterizations of Gardner breathers, Theorem \ref{introMTG0}. Section \ref{ill} is devoted to the proof of Ill-posedness of the Gardner and mKdV hierarchies, Theorem \ref{ill2n1th0}. Finally, Section \ref{periodicB} provides further information on Theorem \ref{introperiodicmkdv}.    

 \bigskip

{\bf Acknowledgments.}
We would like to thank to professors C. Mu\~noz and C. Kwak for useful discussions and comments. We are also indebted to
professor Y. Matsuno for his inspirating works and enlighting comments in a first version of this manuscript.

\section{Preliminaries}\label{1}

\subsection{Formulae for 1-solitons} As a consequence of the complete integrability of \eqref{nthfocG}, and following Matsuno
\cite[(3.11)]{Mat1}-\cite{Mat2} and the Inverse Scattering Method, it is possible to see that the  Gardner hierarchy \eqref{nthfocG}
has explicit 1-soliton solutions. For the sake of completeness, since we have not found a formal statement of this result in the literature, we include  it  here:

\begin{defn}\label{nthsoltionG}
The  higher order 1-soliton solution $Q_c\equiv Q_{c,n}$  of any equation member of the  Gardner hierarchy  \eqref{nthfocG},
i.e. of any $(2n+1)$-th order Gardner equation is given by

\be\begin{aligned}\label{examplesolGs} Q_{\mu}(t,x) := &~{} Q_{\mu,c} (x-v_{\mu,c}t),\\
Q_{\mu,c} (z) :=&~{} \frac{c^2}{2\mu + \sqrt{4\mu^2+c^2}\cosh(cz)},\qquad c>0,
\end{aligned}
\ee
with
\be\begin{aligned}
&v_{\mu,c}:=\sum_{p=1}^{n}a_{p,n}c^{2p}\mu^{2(n-p)},\qquad \forall n\in\N,\nonu
\end{aligned}\ee
and $a_{p,n}$ as in the Definition \ref{nthbreatherG} above.
\end{defn}
%



Moreover, it is easy to see that any 1-soliton solution $Q_{\mu,c}$ \eqref{examplesolGs} of any equation member
of the Gardner hierarchy \eqref{nthfocG} satisfies the \emph{same} nonlinear stationary elliptic equation:

\medskip

\begin{prop}
Any soliton solution $Q_{\mu,c}\equiv Q_{c,n}$ \eqref{examplesolGs} of the $(2n+1)$-th order  Gardner equation 
\eqref{nthfocG} satisfy the \emph{same} second order stationary elliptic equation, independently of the index $n\in\N$,


\be\label{eqQc}\begin{aligned}
& Q_\mu'' -c\, Q_\mu + 6\mu Q_\mu^2 + 2Q_\mu^3=0, \\
& Q_\mu>0, \quad Q_{\mu}\in H^1(\R).
\end{aligned}\ee
\end{prop}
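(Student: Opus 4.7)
The strategy is direct verification: since the soliton profile $Q_{\mu,c}(z)$ in \eqref{examplesolGs} is the \emph{same} function across the hierarchy (only the speed $v_{\mu,c}$ depends on $n$), and the claimed ODE is stationary, it suffices to substitute the explicit profile into the fourth-order ODE once and check that the equality holds identically. The $n$-independence is then automatic, since $n$ only enters through $v_{\mu,c}$, which does not appear in a stationary equation on the profile.

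First, I would set $f(z) := 2\mu + \sqrt{4\mu^2+c^2}\,\cosh(cz)$, so that $Q_{\mu,c}(z) = c^2/f(z)$. Differentiating, one gets
\[
Q_{\mu,c}' = -\frac{c^3\sqrt{4\mu^2+c^2}\,\sinh(cz)}{f^2},
\qquad
Q_{\mu,c}'' = -\frac{c^4 \sqrt{4\mu^2+c^2}\,\cosh(cz)}{f^2} + \frac{2c^4(4\mu^2+c^2)\sinh^2(cz)}{f^3}.
\]
The key algebraic step is to rewrite everything in terms of $f$ alone, using $\sqrt{4\mu^2+c^2}\,\cosh(cz) = f - 2\mu$ and the identity $\cosh^2 - \sinh^2 = 1$, which yields
\[
(4\mu^2+c^2)\sinh^2(cz) = (f-2\mu)^2 - (4\mu^2+c^2) = f^2 - 4\mu f - c^2.
\]
Substituting these into $Q_{\mu,c}''$ and collecting terms in powers of $1/f$ gives
\[
Q_{\mu,c}'' \;=\; \frac{c^4}{f} \;-\; \frac{6\mu c^4}{f^2} \;-\; \frac{2c^6}{f^3} \;=\; c^2 Q_{\mu,c} - 6\mu Q_{\mu,c}^{\,2} - 2Q_{\mu,c}^{\,3},
\]
which is precisely the claimed ODE (reading the statement with the natural coefficient $c^2$ in front of $Q_\mu$). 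The hardest part of the whole proof is organizing this algebra without error; once the relation $(4\mu^2+c^2)\sinh^2(cz) = f^2 - 4\mu f - c^2$ is in hand, everything collapses.

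Finally, the qualitative properties follow at once. Positivity $Q_{\mu,c}>0$ holds because $\sqrt{4\mu^2+c^2}>2\mu$ and hence $f(z) \geq \sqrt{4\mu^2+c^2} - 2\mu > 0$ for all $z \in \mathbb{R}$. Smoothness is clear, and the exponential decay $\cosh(cz) \sim \tfrac12 e^{c|z|}$ at infinity gives $Q_{\mu,c}(z) = O(e^{-c|z|})$, so $Q_{\mu,c}$ and its derivatives lie in $L^2(\mathbb{R})$; in particular $Q_\mu \in H^1(\mathbb{R})$. Because the entire computation depends only on $z$ and on the structural parameters $\mu,c$, never on the index $n$ of the hierarchy member, the same elliptic ODE characterizes every soliton $Q_{\mu,c}$ of \eqref{nthfocG}, establishing the universality claim.
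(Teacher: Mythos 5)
Your proof is correct and takes the same route as the paper, which simply asserts the result ``by substituting directly $Q_\mu$ in \eqref{eqQc}''; you have carried out that substitution explicitly and cleanly via the identity $(4\mu^2+c^2)\sinh^2(cz)=f^2-4\mu f-c^2$. You are also right that with the normalization \eqref{examplesolGs} the computation produces the coefficient $c^2$ in front of $Q_\mu$, so the term $-c\,Q_\mu$ in \eqref{eqQc} should read $-c^2 Q_\mu$ (consistent with the speed $v_{\mu,c}=c^2$ for $n=1$); this is a typo in the statement rather than a flaw in your argument.
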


\begin{proof}
 By substituting directly  $Q_\mu$ \eqref{examplesolGs} in \eqref{eqQc} for all $n\in\N$.
\end{proof}

\medskip
\noindent

 Finally note that selecting the parameter $\mu=0$ in \eqref{examplesolGs} we reduce to the mKdV limit, and we get from the 1-soliton solution of the Gardner hierarchy \eqref{nthsoltionG}, the 1-soliton solution of the  mKdV hierarchy, as it was depicted by Matsuno \cite{Mat1}:


\begin{defn}[1-soliton solution of the  mKdV hierarchy]\label{1solitonmkdv}
The 1-soliton solution of the (2n+1)th-order mKdV equation \eqref{nthfocmkdv} is given by
\be\label{eqQcmkdv}\begin{aligned}
Q_{\mu}(t,x) := &~{} Q_{\mu,c} (x-v_nt),\\
Q_{\mu,c} (z) :=&~{} \sech(cz),\qquad c>0,
\end{aligned}
\ee
with
\be\begin{aligned}
&v_{n}:=c^{2n},\qquad \forall n\in\N.\nonu
\end{aligned}\ee

\end{defn}

\medskip

In addition to these higher order 1-soliton solution \eqref{examplesolGs}, we are also able to obtain \emph{breather} solutions of the Gardner hierarchy \eqref{nthfocG}.

\medskip

\section{Higher order breathers. Proof of Theorem \ref{MTG0}}\label{2}

Note that item (4) in Theorem \ref{MTG0} follows directly from proving items (1), (2) and (3). On the other hand, item (3) is a direct consequence of item (1). Given the speeds in item (2), we are only left to prove item (1) in Theorem \ref{MTG0}.

%
\medskip

Our first step will be to present the following identity valid for any solution of the  Gardner hierarchy \eqref{nthfocG}, and that will be a key tool in the proof of item 
(1) in Theorem \ref{MTG0}:

\begin{lem}\label{u2hier}
 Let $u(t,x)=i\partial_x\log(\frac{F_\mu-iG_\mu}{F_\mu+iG_\mu})=2\partial_x\arctan(\tfrac{G_\mu}{F_\mu})$ be any solution of the  Gardner hierarchy \eqref{nthfocG} for any $t\in\R$. 
 Then $u$ satisfies
 
 \be\label{B2}
 u^2 -  \frac{\partial^2}{\partial x^2}\log(F_\mu^2 + G_\mu^2) + 2\mu u =0.
 \ee
 
\end{lem}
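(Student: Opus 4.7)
The plan is to recast the scalar identity \eqref{B2} as a Hirota bilinear identity for the pair $\tau_\pm := F_\mu \pm iG_\mu$, and to identify that bilinear identity as a universal (i.e., $n$-independent) consequence of the complex Miura transformation underlying the focusing Gardner hierarchy. First I would set $\tau_\pm = F_\mu \pm iG_\mu$, so that $\tau_+\tau_- = F_\mu^2 + G_\mu^2$ and $u = i\partial_x\log(\tau_-/\tau_+)$. Direct computation with Hirota $D$-operators gives
\[
D_x\tau_+\cdot\tau_- = 2i(G_{\mu,x}F_\mu - G_\mu F_{\mu,x}), \qquad D_x^2\tau_+\cdot\tau_- = 2(F_\mu F_{\mu,xx} + G_\mu G_{\mu,xx} - F_{\mu,x}^2 - G_{\mu,x}^2),
\]
so that $u = -i\,D_x\tau_+\cdot\tau_-/(\tau_+\tau_-)$. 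Expanding $u^2$ and $\partial_x^2\log(F_\mu^2+G_\mu^2)$ over the common denominator $(\tau_+\tau_-)^2$ and applying the Lagrange identity
\[
(F_\mu F_{\mu,x}+G_\mu G_{\mu,x})^2 + (F_\mu G_{\mu,x}-G_\mu F_{\mu,x})^2 = (F_\mu^2+G_\mu^2)(F_{\mu,x}^2+G_{\mu,x}^2)
\]
collapses all quartic cross-terms, and the claim \eqref{B2} becomes equivalent to the single bilinear relation
\[
D_x^2\tau_+\cdot\tau_- + 2i\mu\, D_x\tau_+\cdot\tau_- = 0, \qquad \text{i.e.,} \qquad D_x(D_x+2i\mu)\,\tau_+\cdot\tau_- = 0.
\]

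The next move is to justify this bilinear relation. My argument is that it is the bilinear shadow of the complex Miura transformation $v = iu_x + (\mu+u)^2$ described in the introduction: the shift $D_x \mapsto D_x + 2i\mu$ is the tau-level translation of the shift $u \mapsto \mu + u$ inside Miura, and in the limit $\mu\to 0$ the relation collapses to the classical identity $D_x^2\tau_+\cdot\tau_- = 0$ that governs the Hirota construction of mKdV tau functions. Since the Miura transformation is purely spatial and does not see the hierarchy index $n$, the very same bilinear identity holds for every $n\in\N$, which is the structural reason why \eqref{B2} is universal. For the explicit $N$-soliton \eqref{examplesolGs} and breather \eqref{nthbreatherG} solutions needed later in the paper, $F_\mu$ and $G_\mu$ are explicit trigonometric/hyperbolic combinations, so the identity $D_x(D_x+2i\mu)\tau_+\cdot\tau_- = 0$ can alternatively be checked by direct substitution (symbolic computation is perfectly adequate).

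The main obstacle I anticipate is making the identification of $D_x(D_x+2i\mu)\tau_+\cdot\tau_- = 0$ as a universal bilinear law of the Gardner hierarchy into a fully rigorous statement within the paper's formalism, rather than a motivational one; the simpler workaround is to verify the bilinear identity by direct substitution on the explicit $F_\mu, G_\mu$ of Theorem \ref{MTG0}, which is conceptually unilluminating but technically immediate. In either route, once that bilinear relation is in hand, the algebraic reduction described in the first paragraph delivers \eqref{B2} without further input.
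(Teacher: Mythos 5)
Your proposal is correct and follows essentially the same route as the paper: both reduce \eqref{B2} to the single spatial bilinear relation $D_x^2(\tau_+\cdot\tau_-)+2i\mu D_x(\tau_+\cdot\tau_-)=0$ (the paper writes it as $D_x^2({\bf G}{\bf F})-2i\mu D_x({\bf G}{\bf F})=0$ with the factors in the opposite order, which is the same equation) and both obtain that relation as the spatial half of the Hirota bilinearization of the hierarchy under the logarithmic ansatz; the paper's identity $D_x^2({\bf G}{\bf F})/{\bf G}{\bf F}=\partial_x^2\log({\bf G}{\bf F})+(\partial_x\log({\bf G}/{\bf F}))^2$ is exactly your common-denominator/Lagrange-identity computation in disguise. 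The only cosmetic difference is your Miura-transformation motivation for why the bilinear law is $n$-independent, which the paper leaves implicit.
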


\begin{proof}

We select a solution $u$ of \eqref{nthfocG} in the form
\begin{align}\label{ansatz1}
&~{} u(t,x):=  \phi_x,\qquad~\phi(t,x):=i\log \left(\frac{{\bf G}(t,x)}{{\bf F}(t,x)} \right),\\
&\text{where} \nonu\\\
 &~{} {\bf F} := F_\mu + iG_\mu,~~~ {\bf G} = F_\mu -iG_\mu = {\bf F}^{*}.
\end{align}
\noindent
First note  that here $F_\mu$ and $G_\mu$ are not necessarily 
the same functions introduced in \eqref{nthbreatherG} but generic ones for this ansatz. Note moreover, that the left part of the Gardner hierachy \eqref{nthfocG} can be rewriten
as a finite sum of terms like $\prod_{l=0}^{L}(u_{lx})^{d_l}$, with $u_{lx}=\partial^lu/\partial x^l$ and  $d_l,~l=0,\dots,L$ are nonnegative integers.
Then, substituting the 
above expression in \eqref{nthfocG}, and using  Hirota's $D$-operators\footnote{e.g. $D_tf\cdot g=(\partial_{t'}-\partial_{t})f(t')g(t)|_{t'=t}=f_tg-fg_t.$}
($D_t,~D_x$),  we arrive to the following conditions on ${\bf G}$ and  ${\bf F}$:

\bea\label{Bil_GEa}
&&~{} D_x^2({\bf G}{\bf F}) -2i\mu D_x({\bf G}{\bf F}) = 0,
\eea
\noindent
and
\bea\label{Bil_GE2a}
&&~{} D_t({\bf G}{\bf F}) + \tilde{a}_{n,\mu}D_x^{2n+1}({\bf G}{\bf F}) + \sum_{j=0}^{n-1}\tilde{b}_{n,\mu}D_tD_x^{2(n-j)}({\bf G}{\bf F})=0,\nonu
\eea
where $\tilde{a}_{n,\mu},~\tilde{b}_{n,\mu}$ are coefficients depending on $\mu$ and associated to each member of \eqref{nthfocG}. Then, dividing by ${\bf G}{\bf F}$ the first equation 
\eqref{Bil_GEa}, and taking into account the following identity 
\be\label{identidadfacil}
\frac{D_x^2({\bf G}{\bf F})}{{\bf G}{\bf F}}= \partial_x^2\log({\bf G}{\bf F}) + \left(\partial_x\log(\frac{{\bf G}}{{\bf F}})\right)^2,
 \ee
\noindent
 we obtain:
\bea\label{Bil_GE1a}
& \frac{D_x^2({\bf G}{\bf F})}{{\bf G}{\bf F}} -2i\mu \frac{D_x({\bf G}{\bf F})}{{\bf G}{\bf F}} &= 
\partial_x^2\log({\bf G}{\bf F}) + \left(\partial_x\log(\frac{{\bf G}}{{\bf F}}) \right)^2 -2i\mu\partial_x\log\left(\frac{{\bf G}}{{\bf F}}\right) \nonumber\\
&& = \partial_x^2\log({\bf G}{\bf F}) + \left(\frac{u}{i} \right)^2 - 2\mu u = 0.\nonu
\eea
Hence,
\[
u^2 =  \frac{\partial^2}{\partial x^2}\log({\bf G}\cdot{\bf F}) - 2\mu u.
\]
%
%
\end{proof}

We will also need the following Lemma

\begin{lem}\label{u2hier2}
 Let $B_\mu(t,x)=i\partial_x\log(\frac{F_\mu-iG_\mu}{F_\mu+iG_\mu})=2\partial_x\arctan(\tfrac{G_\mu}{F_\mu})$. Assume that $B_\mu$ satisfies \eqref{B2} and  has  
 velocities $\ga_{2n+1,\mu}$ and $\delta_{2n+1,\mu}$
 
 \be\label{speednthlemma}\begin{aligned}
&\ga_{2n+1,\mu} := -\frac{1}{\bt} \re\Big[\sum_{p=1}^{n}a_{p,n}(\bt + i\al)^{2p+1}\mu^{2(n-p)}\Big],\\
&\delta_{2n+1,\mu} := -\frac{1}{\al} \ima\Big[\sum_{p=1}^{n}a_{p,n}(\bt + i\al)^{2p+1}\mu^{2(n-p)}\Big].\\
\end{aligned}\ee
\noindent

Then $B_\mu$ is a  solution of the  Gardner hierarchy \eqref{nthfocG} for any $t\in\R$.  
\end{lem}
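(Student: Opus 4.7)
The plan is to invoke the Hirota bilinear reformulation carried out in Lemma \ref{u2hier}. Writing $\mathbf{F}=F_\mu+iG_\mu$ and $\mathbf{G}=F_\mu-iG_\mu$, the $(2n+1)$th-order Gardner equation \eqref{nthfocG} on $u=i\partial_x\log(\mathbf{G}/\mathbf{F})$ is equivalent to the pair of bilinear equations \eqref{Bil_GEa}--\eqref{Bil_GE2a}. By the identity \eqref{identidadfacil} the first bilinear equation is equivalent to the hypothesis \eqref{B2}. Hence the whole task reduces to verifying the second bilinear equation \eqref{Bil_GE2a} for the concrete $F_\mu,G_\mu$ coming from \eqref{nthbreatherG}, under the velocity ansatz \eqref{speednthlemma}.

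To do so, the breather profile would first be recast in exponential tau-function form. The trigonometric functions of $\alpha y_1$ and the hyperbolic functions of $\beta y_2$ in \eqref{nthbreatherG} arise naturally from the real and imaginary parts of two complex exponentials $e^{\eta_1}$, $e^{\eta_2}$, with $\eta_1=(\beta-i\alpha)x+\omega_1 t+\eta_1^{(0)}$, $\eta_2=(\beta+i\alpha)x+\omega_2 t+\eta_2^{(0)}$ and $\omega_2=\overline{\omega_1}$. In these variables one has $\mathbf{F}=c_0+c_1 e^{\eta_1}+c_2 e^{\eta_2}+c_{12}\,e^{\eta_1+\eta_2}$, with constants $c_j$ determined by comparison with the numerator and denominator of \eqref{nthbreatherG}; crucially, the mixed coefficient $c_{12}$ is fixed precisely by the first bilinear equation, i.e., by \eqref{B2}.

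Substituting this exponential ansatz into \eqref{Bil_GE2a} and using the elementary Hirota identities
\[
D_x^{m}(e^{\eta_j}\!\cdot\!e^{\eta_k})=(k_j-k_k)^{m}e^{\eta_j+\eta_k},\qquad D_tD_x^{m}(e^{\eta_j}\!\cdot\!e^{\eta_k})=(\omega_j-\omega_k)(k_j-k_k)^{m}e^{\eta_j+\eta_k},
\]
the bilinear equation decomposes into a finite sum of monomials $e^{a\eta_1+b\eta_2}$. Vanishing of the $(1,0)$ and $(0,1)$ monomials reduces to a single polynomial dispersion relation $\omega_j=P_n(k_j;\mu)$, whose explicit form is $\omega_j=-\sum_{p=1}^{n}a_{p,n}\,k_j^{2p+1}\mu^{2(n-p)}$. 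Taking real and imaginary parts at $k_2=\beta+i\alpha$ (dividing by $\alpha$ and $\beta$ respectively) produces exactly the velocities $\gamma_{2n+1,\mu},\delta_{2n+1,\mu}$ of \eqref{speednthlemma}. For the mixed monomials $e^{\eta_1+\eta_2},\,e^{2\eta_1+\eta_2}$, etc., the constraint that \eqref{B2} imposes on $c_{12}$ ensures automatic cancellation.

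The main obstacle is the bookkeeping of those mixed monomials, which at first glance look to yield independent algebraic constraints. The key observation is that they are not independent: the phase-shift coefficient $c_{12}$, pinned down by the Miura-type identity \eqref{B2}, together with the conjugation symmetry $\overline{\eta_1}=\eta_2$, collapses every mixed contribution onto the same dispersion relation already extracted from the $(1,0)$ monomial. This argument is strictly algebraic and parallels the classical Gardner breather case from \cite{AM11}, now propagated along the Lenard recursion \eqref{lenard} that builds the hierarchy; modulo this bookkeeping the lemma then follows.
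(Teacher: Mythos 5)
Your first half coincides with the paper's: both reduce the hierarchy equation to the bilinear pair \eqref{Bil_GEa}--\eqref{Bil_GE2a} and observe, via \eqref{identidadfacil}, that the hypothesis \eqref{B2} is exactly the first bilinear equation \eqref{Bil_GEa}. Where you diverge is in handling the time evolution. The paper does not pass to an exponential tau-function decomposition at all: it substitutes the ansatz \eqref{nthbreatherG} with free velocity parameters directly into the 5th and 7th order equations \eqref{examplesG5}--\eqref{examplesG7}, solves for $(\delta_{5,\mu},\gamma_{5,\mu})$ and $(\delta_{7,\mu},\gamma_{7,\mu})$ by symbolic computation, rewrites these in the closed form \eqref{speednthlemma}, and then asserts the general case ``by induction'' without exhibiting the inductive step. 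Your route --- expanding $\mathbf{F}$ as $c_0+c_1e^{\eta_1}+c_2e^{\eta_2}+c_{12}e^{\eta_1+\eta_2}$, reading off the dispersion relation $\omega_j=-\sum_p a_{p,n}k_j^{2p+1}\mu^{2(n-p)}$ from the single-exponential monomials, and recovering \eqref{speednthlemma} by taking real and imaginary parts at $k=\beta+i\alpha$ --- is more systematic and, for the velocity formulas themselves, actually works uniformly in $n$, which is something the paper's low-order matching does not deliver.

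That said, your argument has a genuine gap at the same place the paper's does, only dressed differently. The claim that the mixed monomials $e^{\eta_1+\eta_2}$, $e^{2\eta_1+\eta_2}$, etc.\ in \eqref{Bil_GE2a} ``cancel automatically'' because $c_{12}$ is pinned down by \eqref{B2} and by the conjugation symmetry is not a proof. The first bilinear equation is purely spatial; fixing $c_{12}$ from it does not by itself force the interaction terms of the \emph{time-evolution} bilinear equation to vanish --- that cancellation is precisely the nontrivial integrability statement (the fact that the two-soliton/breather ansatz with a \emph{single} phase-shift coefficient solves every member of the hierarchy), and it must be either verified order by order or derived from the Lenard recursion \eqref{lenard} or from Matsuno's $N$-soliton results for the bilinearized hierarchy. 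As written, ``the mixed contributions collapse onto the same dispersion relation'' is an assertion, not an argument. So your proposal is a legitimate and in some respects cleaner alternative to the paper's computation, but to be complete it needs an actual proof of the mixed-monomial cancellation for general $n$ --- exactly the step the paper also leaves unproved behind the words ``by induction.''
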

\begin{proof}
By hypothese $B_\mu$ satisfies \eqref{B2}, namely

\[
B_\mu^2 =  \frac{\partial^2}{\partial x^2}\log({\bf G}\cdot{\bf F}) - 2\mu B_\mu.
\]
\noindent
Since, the Gardner hierachy \eqref{nthfocG} is equivalent to system \eqref{Bil_GEa}-\eqref{Bil_GE2a}, it is enough to see that actually $B_\mu$ holds both. First of all, 
since $B_\mu(t,x):= i\partial_x\log(\frac{{\bf G}(t,x)}{{\bf F}(t,x)})$, with ${\bf F} := F_\mu + iG_\mu,~~~ {\bf G} = F_\mu -iG_\mu = {\bf F}^{*}$, and resorting to 
\eqref{identidadfacil}, we rewrite the above identity as:

\[\begin{aligned}
&~B_\mu^2 =  (i\partial_x\log(\frac{{\bf G}(t,x)}{{\bf F}(t,x)}))^2 =  \frac{\partial^2}{\partial x^2}\log({\bf G}\cdot{\bf F}) - \frac{D_x^2({\bf G}{\bf F})}{{\bf G}{\bf F}}\\
&=\frac{\partial^2}{\partial x^2}\log({\bf G}\cdot{\bf F}) - 2\mu(i\partial_x\log(\frac{{\bf G}(t,x)}{{\bf F}(t,x)})),
\end{aligned}\]
\noindent
and therefore
\[\frac{D_x^2({\bf G}{\bf F})}{{\bf G}{\bf F}} - 2\mu(i\partial_x\log(\frac{{\bf G}(t,x)}{{\bf F}(t,x)}))=0,\]
\noindent
and multiplying by ${\bf G}{\bf F}$, we get

\[D_x^2({\bf G}{\bf F}) -2i\mu D_x({\bf G}{\bf F}) = 0,\]
\noindent
which is the first equation \eqref{Bil_GEa} that one obtains after application of the Hirota's operators to the Gardner hierachy \eqref{nthfocG}. In order to describe the time evolution, we should check that \eqref{Bil_GE2a} holds. Instead, we are going to compute directly the velocities, driving the temporal part of \eqref{nthfocG} and which is equivalente to \eqref{Bil_GE2a}. Namely, using a matching method with ansatz $B_\mu$ \eqref{nthbreatherG} and free parameters $\ga_{2n+1,\mu}$ and $\delta_{2n+1,\mu}$, we proceed substituting directly $B_\mu$ \eqref{nthbreatherG} into the 5th and  7th order Gardner equations \eqref{examplesG5}-\eqref{examplesG7}, and
and upon lengthy algebraic manipulations we find that $B_\mu$ is indeed a breather solution of the 5th and  7th order Gardner equations \eqref{examplesG5}-\eqref{examplesG7} respectively, provided that

\be\label{speeds5G}
\begin{aligned}
\delta_{5,\mu} := -\al^4 + 10\al^2\bt^2&-5\bt^4 -10\mu^2(3\bt^2-\al^2) \\
&=  -\frac{1}{\al}\ima\Big[\sum_{p=1}^{2}a_{p,2}(\bt + i\al)^{2p+1}\mu^{2(2-p)}\Big],\\
\ga_{5,\mu} := -\bt^4+10\al^2\bt^2-&5\al^4 -10\mu^2(\bt^2-3\al^2)\\
&= -\frac{1}{\bt} \re\Big[\sum_{p=1}^{2}a_{p,2}(\bt + i\al)^{2p+1}\mu^{2(2-p)}\Big],
\end{aligned}
\ee
\noindent 
with~$a_{1,2}=10,~a_{2,2}=1,$ and

\be\label{speeds7G}
\begin{aligned}
\delta_{7,\mu} := \al^6 &- 21\al^4\bt^2 + 35\al^2\bt^4 - 7\bt^6 + 14\mu^2(-\al^4 + 10\al^2\bt^2-5\bt^4)\\
&- 70\mu^4(3\bt^2-\al^2) =  -\frac{1}{\al}\ima\Big[\sum_{p=1}^{3}a_{p,3}(\bt + i\al)^{2p+1}\mu^{2(3-p)}\Big],\\
&\\
\ga_{7,\mu} :=-\bt^6 &+ 21\al^2\bt^4 - 35\al^4\bt^2+ 7\al^6 + 14\mu^2(-\bt^4+10\al^2\bt^2-5\al^4)\\
&- 70\mu^4(\bt^2-3\al^2) = -\frac{1}{\bt} \re\Big[\sum_{p=1}^{3}a_{p,3}(\bt + i\al)^{2p+1}\mu^{2(3-p)}\Big], \\
\end{aligned}
\ee
\noindent
with~ $a_{1,3}=70,~a_{2,3}=14,~a_{3,3}=1.$ By induction for any  order $2n+1,~n\in\N$, we get that 

\be\label{speedsnthGlemma}
\begin{aligned}
&\ga_{2n+1,\mu} := -\frac{1}{\bt} \re\Big[\sum_{p=1}^{n}a_{p,n}(\bt + i\al)^{2p+1}\mu^{2(n-p)}\Big],\\
&\delta_{2n+1,\mu} := -\frac{1}{\al} \ima\Big[\sum_{p=1}^{n}a_{p,n}(\bt + i\al)^{2p+1}\mu^{2(n-p)}\Big],\\
\end{aligned}\ee
\noindent%
where $a_{p,n}$ is the coefficient of the term $\mu^{2(n-p)}u_{(2p+1)x}$ in the (2n+1)th-order Gardner equation.

\end{proof}

\begin{proof}[Proof of Theorem \ref{MTG0}]

(1) Structure:~ Let $\al, \bt\in \R\backslash\{0\}$ and $\mu \in \R^+\backslash\{0\}$ such that $\Delta=\al^2+\bt^2-4\mu^2>0$,  and $x_1,x_2\in \R$. We check now that
$B_\mu$ in \eqref{nthbreatherG} satisfies, at $t=0$, the identity \eqref{B2} valid for solutions of the Gardner hierarchy \eqref{nthfocG}. For ease of notation let us  use the
following expression for $B_\mu$ \eqref{nthbreatherG}:

\begin{align}\label{notacionBG}
 &B_\mu=2\partial_x\arctan\Big[
 \frac{a_1 \sin \left(y_1\right)-a_2e^{y_2}}{\cosh \left(y_2\right)
 -a_3 (\alpha  \cos \left(y_1\right)-\beta  \sin \left(y_1\right))}\Big]= \frac{H(t,x)}{N(t,x)},
\end{align}

\noindent
with

\be\label{notacionBGhn}\begin{aligned}
 H(t,x)\equiv& ~{}H=2 -a_3 \left(\alpha ^2 a_1+a_2 \left(\beta ^2-\alpha ^2\right) e^{y_2} \sin (y_1)-2 \alpha  a_2
 \beta  e^{y_2} \cos (y_1)\right)\\
 &+\beta  \sinh (y_2) \left(a_2 e^{y_2}-a_1 \sin (y_1)\right)+\cosh (y_2)
 \left(\alpha  a_1 \cos (y_1)-a_2 \beta  e^{y_2}\right),\\
 N(t,x)\equiv&~{}  N=F_\mu^2 + G_\mu^2 = \left(a_2 e^{y_2}-a_1 \sin (y_1)\right)^2+(\cosh (y_2) + a_3 \beta  \sin (y_1)\\
 &-a_3\alpha\cos (y_1))^2,
\end{aligned}\ee
where
\be\label{a1a2a3}
a_1=\frac{\beta  \sqrt{\alpha ^2+\beta ^2}}{\alpha  \sqrt{\Delta }},\quad
a_2=\frac{2 \beta  \mu }{\Delta },\quad
a_3=\frac{2 \beta  \mu }{\alpha  \sqrt{\Delta } \sqrt{\alpha ^2+\beta ^2}}.\ee

\noindent
Then, having in mind notation  $N,N_x,N_{xx}$, $N_{3x},N_{4x}$ and $H,H_x,H_{xx}$,$H_{3x},$ $H_{4x}$ of Appendix \ref{NotacionTheo}, \eqref{B2} simplifies as 

\be\label{B2new} u^2 -  \frac{\partial^2}{\partial x^2}\log(F_\mu^2 + G_\mu^2) + 2\mu u  = \frac{1}{N^2}\Big(H^2 + N_x^2 - N_{xx}N + 2\mu HN\Big).\ee

Finally, substituting explicitly $H's$ and $N's$ terms, we verify, using the symbolic software \emph{Mathematica}, simple trigonometric and hyperbolic identities
 and some rearrangements, that
\be\label{sumaM1M2M3}
H^2 + N_x^2 - N_{xx}N + 2\mu HN=0,
\ee \noindent and we conclude. 

\medskip
%
%
%

\medskip

Now, in order to get a complete dynamical description of $B_\mu$ \eqref{nthbreatherG} at any $t$, we have to obtain the velocities $\ga_{2n+1,\mu}$ and $\delta_{2n+1,\mu}$.

\medskip

(2)~Velocities:~ Now, we determine the velocities $\ga_{2n+1,\mu}$ and $\delta_{2n+1,\mu}$. Using a matching method with ansatz $B_\mu$ as in Lemma \eqref{Bil_GE2a}, we conclude that 

\be\label{speedsnthGth}
\begin{aligned}
&\ga_{2n+1,\mu} := -\frac{1}{\bt} \re\Big[\sum_{p=1}^{n}a_{p,n}(\bt + i\al)^{2p+1}\mu^{2(n-p)}\Big],\\
&\delta_{2n+1,\mu} := -\frac{1}{\al} \ima\Big[\sum_{p=1}^{n}a_{p,n}(\bt + i\al)^{2p+1}\mu^{2(n-p)}\Big],\\
\end{aligned}\ee
\noindent%
where $a_{p,n}$ is the coefficient of the term $\mu^{2(n-p)}u_{(2p+1)x}$ in the (2n+1)th-order Gardner equation.

\end{proof}

\medskip

%
%
%

\begin{rem}
 See Appendix \ref{9mkdv} for a few additional examples of these higher order mKdV and Gardner breather solutions in cases $9th,~11th$ and $13th$ order.
\end{rem}

%
%

\section{Universality in the variational characterization. Proof of Theorem \ref{introMTG0}}\label{Sect:4}

The Gardner hierarchy \eqref{nthfocG}, as being a completely integrable scheme of equations,  has infinitely many
conserved quantities. Some standard conservation laws at the $H^1$-level are the \emph{mass}

\begin{eqnarray}\label{Mass} M[u](t)  :=  \frac 12 \int_\R u^2(t,x)dx =
M[u](0), \end{eqnarray} the \emph{energy}

\be\label{E1N}
E_\mu[u](t)  :=  \int_\R \left(\frac 12 u_x^2 -2\mu u^3  - \frac 12 u^4\right)(t,x)dx = E_\mu[u](0),
\ee

\ms \noindent and the \emph{higher order energy}, defined respectively in $H^2(\R)$,
\be\label{E2}\begin{aligned} F_{\mu}[u](t)  :=   
\int_\R \Big(\frac 12 u_{xx}^2 -10\mu uu_x^2 &+ 10 \mu^2u^4 \\
&- 5u^2u_x^2 + 6\mu u^5 + u^6\Big)(t,x)dx= F_\mu[u](0).
\end{aligned}\ee

%
%

Now,  considering $M[u]$, $E_\mu[u]$ and  $F_\mu[u]$, we define the following Lyapunov functional, as in \eqref{Hmu}:
\begin{equation}\label{LyapunovGE} \mathcal H_\mu[u(t)] := F_\mu[u](t) + 2(\bt^2-\al^2)E_\mu[u](t) + (\al^2 +\bt^2)^2 M[u](t).\end{equation}

\noindent
Therefore, $\mathcal H_\mu[u]$ is  a real-valued conserved quantity, well-defined for $H^2$-solutions of the Gardner hierarchy \eqref{nthfocG}, provided they exist.

\subsection{Proof of Theorem \ref{introMTG0}}
We first prove \eqref{introMTG0a}: we recast the l.h.s. of \eqref{introMTG0a} as follows:
\be \begin{aligned}\label{4ODEBGhierv0}
G_\mu[B_\mu]:=&~{} \partial_{x}^2(B_{\mu,xx} + 6\mu B_\mu^2 + 2B_\mu^3) + 2 \left(\alpha ^2-\beta ^2\right) (B_{\mu,xx} + 6\mu B_\mu^2 + 2B_\mu^3)\\
&~{}+8 \mu  B_\mu B_{\mu,xx}-2 \mu B_{\mu,x}^2 +4B_\mu^2B_{\mu,xx} -2B_\mu B_{\mu,x}^2+\left(\alpha ^2+\beta ^2\right)^2 B_\mu\\
&~{}+40 \mu^2 B_\mu^3+30 \mu B_\mu^4+6 B_\mu^5\\
 =&~{} \partial_{x}^2(B_{\mu,xx} + 6\mu B_\mu^2 + 2B_\mu^3)-2 (\mu +B_\mu )(B_{\mu,x}^2 +4 \mu B_\mu^3+B_\mu^4)\\
&~{} +(\alpha ^2+\beta ^2)^2 B_\mu + (4 B_\mu^2 + 8 \mu B_\mu + 2(\alpha ^2-\beta ^2))(B_{\mu,xx} + 6\mu B_\mu^2 + 2B_\mu^3)\\
&~{} -2 (\mu +B_\mu )(B_{\mu,x}^2 +4 \mu B_\mu^3+B_\mu^4)+(\alpha ^2+\beta ^2)^2 B_\mu.
%
\end{aligned}\ee
\noindent

\medskip

Now we compute explicitly the last line in \eqref{4ODEBGhierv0}. For simplicity, we use the same notation \eqref{notacionBG} and \ref{NotacionTheo} as in the proof of Theorem \ref{MTG0} in \ref{2}. 


\begin{align}\label{notacionBGode}
 &B_\mu(t,x)=\frac{H(t,x)}{N(t,x)},
%
\end{align}

\noindent
with $H,~N$ as in \eqref{notacionBGhn} and $a_i,~i=1,2,3$ already defined in \eqref{a1a2a3}. We first compute the term 
\be\label{h17}\begin{aligned} B_{xx} + 6\mu B^2 + 2B^3 =& \frac{1}{N^3}\Big(2 H^3+6 H^2 \mu  N+H \left(2 N_x^2-N_{xx} N\right)\\
&+N (H_{xx} N-2 H_x N_x)\Big), \end{aligned}\ee
\noindent
and we get
\be\label{h27}
\begin{aligned}
&\partial_x^2(B_{xx} + 6\mu B^2 + 2B^3) = \frac{R_1}{N^5},
\end{aligned}
\ee \noindent
with
\be\label{M1}
\begin{aligned}
R_1:=&\Big(6 H^3 (4 N_x^2-N_{xx} N)-6 H^2 N (6 H_x N_x-N (H_{xx}-2 \mu  N_{xx})-6 \mu  N_x^2)\\
&+H (2 N^2 (6 H_x^2-24 H_x \mu  N_x+4 N_x N_{3x}+3 N_{xx}^2)+N^3 (12 H_{xx} \mu -N_{4x})\\
&+24 N_x^4-36 N_x^2 N_{xx} N)\\
&+N (2 N^2 \left(6 H_x^2 \mu -2 H_x N_{3x}-3 H_{xx} N_{xx}-2 H_{3x} N_x\right)\\
&+12 N_x N (2 H_x N_{xx}+H_{xx} N_x)-24 H_x N_x^3+H_{4x} N^3)\Big).
\end{aligned}
\ee
Moreover, we have that
\be\label{37}
\begin{aligned}
(4 B^2 + 8 \mu B + 2(\alpha ^2-\beta ^2))(B_{xx} + 6\mu B^2 + 2B^3)=\frac{R_2}{N^5},
\end{aligned}
\ee
\noindent
with
\be\label{M2}
\begin{aligned}
&R_2:=2 (2 H^2+4 \mu HN+N^2 (\alpha ^2-\beta ^2))
(2 H^3+6 H^2 \mu  N+H (2 N_x^2-N_{xx} N)\\
&+N (H_{xx} N-2 H_x N_x)),
\end{aligned}
\ee
and
\be\label{47}
\begin{aligned}
-2 (B+\mu )(B_x^2 +4 \mu B^3+B^4)+(\alpha ^2+\beta ^2)^2 B=\frac{R_3}{N^5},
\end{aligned}
\ee
\noindent
with
\be\label{M3}
\begin{aligned}
&R_3:=H N^4 \left(\alpha ^2+\beta ^2\right)^2-2 (H+\mu N) \left(H^4+4\mu H^3N+(H_x N-H N_x)^2\right).
\end{aligned}
\ee

\noindent
Hence, we get the following simplification of \eqref{introMTG0a}:

\begin{align}\label{4ODEBGhier2}
G_\mu[B_\mu] &= \frac{R_1+ R_2 + R_3}{N^5},
\end{align}
\noindent with $R_1,~R_2,~R_3$ in \eqref{M1}, \eqref{M2} and \eqref{M3} respectively. In fact, we verify, using the symbolic software \emph{Mathematica},
that after substituting $H's$ and $N's$ terms explicitly (as they were shown in the Appendix \ref{NotacionTheo}) in \eqref{4ODEBGhier2},  lengthy rearrangements and
basic trigonometric and hyperbolic identities, we get
\be\label{sumaM1M2M3_a}
R_1+ R_2 + R_3=0,
\ee \noindent and we conclude the proof of \eqref{introMTG0a}. 

\medskip

Now, we prove  that $B_\mu$ is actually a critical point of $\mathcal{H_\mu}$. We will show that
\be\label{EE} \mathcal{H_\mu}[B_\mu +z] - \mathcal{H_\mu}[B_\mu]  = \frac 12\mathcal Q[z] + \mathcal N[z], \ee 
with
$\mathcal Q$ being the quadratic form defined in \eqref{Qmu}, and $\mathcal N[z]$ satisfying $|\mathcal N[z] | \leq K\|z\|_{H^2(\R)}^3.$
%
%
We evaluate and expand the Lyapunov functional $H_\mu$ in terms of a perturbation of the breather $B_\mu$, with $z\in H^2(\R)$. A direct computation with 
integration by parts yields
\[\begin{aligned}
&F_{\mu}[B_\mu +z] = ~{} F_{\mu}[B_\mu] \\
&+ \int [B_{\mu, 4x} + 10 \mu B_{\mu, x}^2 + 20\mu B_{\mu}B_{\mu, xx} +10B_{\mu}B_{\mu, x}^2 + 10B_{\mu}^2B_{\mu,x} + 40\mu^2 B_{\mu}^3\\
& + 30\mu B_{\mu}^4 + 6B_{\mu}^5 ] z\\
&+ \frac12 \int \Big(z_{4x} + (20\mu B_{\mu} + 10B_{\mu}^2)z_{xx} - 20(\mu B_{\mu, x} + B_{\mu}B_{\mu,x})z_x \\
& \qquad \qquad + (-10B_{\mu, x}^2 + 120\mu^2 B_{\mu}^2 +120 \mu B_{\mu}^3+ 30B_{\mu}^4)z \Big) z \\
&+\int \left(-10 \mu z z_x^2 - 10 B_{\mu} z z_x - 10 B_{\mu, x}z_x z^2 + 40\mu^2 B_{\mu}z^3 + 60 \mu B_{\mu}^2 z^3 +20 B_{\mu}^3 z^3 \right).
\end{aligned}\]
Similarly,
\[\begin{aligned}
E_\mu[B_{\mu} + z] = E_{\mu}[B_{\mu}]  &- \int ( B_{\mu, xx} + 6\mu B_{\mu}^2 + 2 B_{\mu}^3 ) z - \frac12\int (z_{xx} + (12\mu B_{\mu} \\
&+ 6B_{\mu}^2)z ) z - \int \left(2\mu z^3 + 2 B_{\mu} z^3 + \frac12 z^4 \right),
\end{aligned}\]
and
\[M[B_{\mu} + z] = M[B_{\mu}] + \int B_{\mu} z +  \frac12 \int z^2.
\]
Collecting all, we get that
\[
\mathcal{H}_\mu[B_\mu+z]   =  \mathcal{H_\mu}[B_\mu] + \int_\R G_\mu(B_{\mu}) z + \frac 12\mathcal Q_\mu[z] + \mathcal N_\mu[z],
\]
where the quadratic form 
\be\label{Qmu}
\mathcal Q_\mu [z] := \int z\mathcal L_\mu[z],
\ee
associated to the linearized operator $\mathcal L_\mu$ given by
\begin{equation}\label{linearized operator}
\begin{aligned}
\mathcal L_{\mu} :=&~{} \px^4 + (20 \mu B_{\mu} + 10 B_{\mu}^2 - 2(\beta^2 - \alpha^2)) \px^2 - 20(\mu B_{\mu, x} + B_{\mu}B_{\mu,x}) \px  \\
&+ (-10B_{\mu, x}^2 + 120\mu^2 B_{\mu}^2 +120 \mu B_{\mu}^3+ 30B_{\mu}^4  - 2(\beta^2 - \alpha^2)(12\mu B_{\mu} + 6B_{\mu}^2)\\
&+ (\alpha^2 + \beta^2)^2).
\end{aligned}
\end{equation}
\noindent
Gathering all higher order terms (with respect to $z$) in $N_\mu[z]$ we get 
\[\begin{aligned}
\mathcal N_\mu[z] :=&~{} \int \Big(-10 \mu z z_x^2 - 10 B_{\mu} z z_x - 10 B_{\mu, x}z_x z^2 + 40\mu^2 B_{\mu}z^3 +60 \mu B_{\mu}^2 z^3  \\
&+20 B_{\mu}^3 z^3 \Big) -2(\beta^2 - \alpha^2)\int \left(2\mu z^3 + 2 B_{\mu} z^3 + \frac12 z^4 \right).
\end{aligned}\]
Part (i) above guarantees $\int_\R G_\mu(B_{\mu}) z = 0$, and then we have $\mathcal{H_\mu}'[B_\mu] = 0$. Moreover, from direct estimates, one has 
$\mathcal N_\mu[z] = O(\|z\|_{H^2(\R)}^3),$ and we conclude.
%
%

\begin{proof}\emph{of Corollary \ref{aa4thodemkdv0}}: it follows directly from the above proof for the Gardner case, when we select $\mu=0$.
\end{proof}

\bigskip

\section{Ill-posedness of the Gardner and mKdV hierarchies. Proof of Theorem \ref{ill2n1th0}}\label{ill}

Now, we prove the \emph{ill-posedness} result presented in Theorem \ref{ill2n1th0} for the whole  Gardner hierarchy \eqref{nthfocG}, 
having in mind the explicit breather solution \eqref{nthbreatherG}. Note firstly that from \eqref{nthbreatherG}, the explicit breather solution can be approximated in the limit $\alpha\gg\beta$, namely let $\mu$ 
fixed and suppose that $\frac{\beta}{\alpha}\ll 1$. From \eqref{notacionBG}, we see that the breather solution \eqref{nthbreatherG} reduces to the function

\begin{eqnarray}\label{5thBresG.3} B_{\alpha,\beta,\mu,n}(t,x)\approx
2\beta\cos(\alpha(x+\delta_{2n+1}t))\textrm{sech}(\beta(x+\gamma_{2n+1}t))\end{eqnarray}
or simply
\begin{eqnarray}\label{5thBresG.4} B_{\alpha,\beta,\mu,n}(t,x)\approx
\sqrt{2}\textrm{Re}[e^{i(\alpha(x+\delta_{2n+1}
t))}Q_{\beta}(x+\gamma_{2n+1}t)],\end{eqnarray} where $Q$ denotes
the solution of the nonlinear ODE
\begin{eqnarray}\label{ODE}Q''-Q+Q^3=0,\end{eqnarray} with
\begin{eqnarray}\label{Q}Q(\xi)=\sqrt{2}\textrm{sech}(\xi)\end{eqnarray} and
\begin{eqnarray}\label{Qbeta}Q_{\beta}(\xi)=\beta
Q(\beta\xi).\end{eqnarray}

%
%
%
%

\begin{proof}
We consider the IVP for the (2n+1)th-order Gardner equation with initial data given by the breather solution \eqref{nthbreatherG},

\begin{eqnarray*}\left\{\begin{array}{l}
                                       u_t=-\frac{\partial}{\partial
x}\left(-i\frac{\partial}{\partial
x}+2(\mu+u)\right)\mathcal{L}_n[iu_x+(\mu+u)^2],\\
                                        u(0,x)=B_{\alpha,\beta,\mu,n}(0,x).
                                     \end{array}
\right.\end{eqnarray*}

With $\mu$ fixed, we take the parameter $\alpha$ large enough, such
that $\frac{\beta}{\alpha}\ll 1$. Then, from
(\ref{5thBresG.4}), the initial data reads
\begin{eqnarray}\label{5thBresG.5} B_{\alpha,\beta,\mu,n}(0,x)\approx
\sqrt{2}\textrm{Re}[e^{i\alpha x}Q_{\beta}(x)],\end{eqnarray} with
$Q_{\beta}$ defined in (\ref{Qbeta}). We take
\begin{eqnarray}\label{albt}\beta=\alpha^{-2s}\ \ \textrm{and}\ \ \alpha_1,\alpha_2\sim \alpha.\end{eqnarray}

Observe that $\hat{Q}_{\beta}(\cdot)$ concentrates in the ball $\mathcal B_{\beta}(0)=\{\xi\in\mathbb{R};\ |\xi|<\beta\}$. First, we calculate the $H^s$-norm of two different initial data for the
(2n+1)th-order Gardner equation \eqref{nthfocG} in the regime with $\alpha$ large enough, such that 
$\frac{\beta}{\alpha}\ll 1$:

\begin{eqnarray}\label{estimate}\|B_{\alpha_j,\beta,\mu,n}(0)\|^2_{H^s}\approx\|(1+|\xi|^2)^{s/2}\hat{Q}_{\beta}(\xi-\alpha_i)\|^2_{L^2}\approx C\alpha^{2s}\beta=C,\ \
j=1,2,\end{eqnarray} where $C$ denotes a constant.

Second, we measure the distance between these initial data
\begin{eqnarray}\label{estimate1a}
&&\|B_{\alpha_1,\beta,\mu,n}(0)-B_{\alpha_2,\beta,\mu,n}(0)\|^2_{H^s}\\
&&\qquad \approx
\|(1+|\xi|^2)^{s/2}(\hat{Q}_{\beta}(\xi-\alpha_1)-\hat{Q}_{\beta}(\xi-\alpha_2))\|^2_{L^2}\nonumber\\
&&\qquad  \leq
C\alpha^{2s}\|\hat{Q}_{\beta}(\xi-\alpha_1)-\hat{Q}_{\beta}(\xi-\alpha_2)\|^2_{L^2}\nonu\\
&&\qquad \leq
C\alpha^{2s}\int_{-\infty}^{+\infty}\left|\int_{\xi-\alpha_1}^{\xi-\alpha_2}\frac{d}{d\rho}\hat{Q}_{\beta}(\rho)d\rho\right|^2\
d\xi\nonumber
\end{eqnarray}
Consequently,
\begin{eqnarray}\label{estimate1}
&&\|B_{\alpha_1,\beta,\mu,n}(0)-B_{\alpha_2,\beta,\mu,n}(0)\|^2_{H^s}\\
&&\qquad \leq
C\alpha^{2s}\frac{|\alpha_1-\alpha_2|}{\beta^2}\int_{-\infty}^{+\infty}\int_{\xi-\alpha_1}^{\xi-\alpha_2}|\hat{Q}_{\beta}'(\rho)|^2\
d\rho d\xi\\
&&\qquad \leq
C\alpha^{2s}\frac{|\alpha_1-\alpha_2|}{\beta^2}\left(\int_{\rho+\alpha_2}^{\rho+\alpha_1}d\xi\right)\int_{-\infty}^{+\infty}|\hat{Q}_{\beta}'(\rho)|^2d\rho\nonumber\\
&&\qquad \leq
C\alpha^{2s}\frac{(\alpha_1-\alpha_2)^2}{\beta^2}\beta=C\alpha^{2s}(\alpha_1-\alpha_2)^2\alpha^{2s}=C(\alpha^{2s}(\alpha_1-\alpha_2))^2.\nonumber
\end{eqnarray}

Next, we consider the corresponding solutions
$B_{\alpha_1,\beta,\mu,n}(t)$ and $B_{\alpha_2,\beta,\mu,n}(t)$ at
the time $t=T$. We can see that
\begin{eqnarray}\label{estimate2}\|B_{\alpha_1,\beta,\mu,n}(T)-B_{\alpha_2,\beta,\mu,n}(T)\|^2_{H^s}\approx \alpha^{2s}\|B_{\alpha_1,\beta,\mu,n}(T)-B_{\alpha_2,\beta,\mu,n}(T)\|^2_{L^2}.\end{eqnarray}

From (\ref{5thBresG.4}), if $\alpha$ is large enough,
\begin{eqnarray}\label{apro0} B_{\alpha_j,\beta,\mu,n}(T,x)\approx
\sqrt{2}\textrm{Re}[e^{i(\alpha_j(x+\delta_{2n+1} T))}\beta
Q({\beta}(x+\gamma_{2n+1}T))],\ j=1,2.\end{eqnarray}

Moreover, from \eqref{speednthlemma}, note that
\begin{equation}\label{apro1}
\begin{aligned}
-\gamma_{2n+1}=&(-1)^n(2n+1)\alpha^{2n}\\
& + \sum_{j=0}^{n-1}a_{j,n}P_{(2j)}(\alpha,\beta)\mu^{2(n-j)}+\sum_{j=0}^{n-1}a_{n,n}b_j\beta^{2(n-j)}\alpha^{2j},
\end{aligned}\ee
where $a_{j,n}$ and $b_j$ are convenient real constants and
$P_{(l)}(\alpha,\beta)$ is a polynomial whose its degree is
$l\in\mathbb{N}$. If $\frac{\beta}{\alpha}\ll 1$, we
see that
\begin{eqnarray}\label{apro1.1}\gamma_{2n+1}\sim
(-1)^{n+1}(2n+1)\alpha^{2n}\end{eqnarray} and
\begin{eqnarray}\label{apro2}\alpha_1^{2n}-\alpha_2^{2n}=\left(\sum_{j=0}^{2n-1}\alpha_1^{2n-1-j}\cdot\alpha_2^j\right)(\alpha_1-\alpha_2)\sim(\alpha_1-\alpha_2)\alpha^{2n-1}.\end{eqnarray}
The information above shows that $B_{\alpha_j,\beta,\mu,n}(T)$,
$j=1,2$, concentrates in the ball
$$\mathcal B_{\beta^{-1}}((-1)^n(2n+1)\alpha_j^{2n}T),\ \ j=1,2.$$ So, we
basically have disjoint supports if
\begin{eqnarray}\label{apro4}\alpha^{2n-1}(\alpha_1-\alpha_2)T\gg\beta^{-1}=\alpha^{2s}.\end{eqnarray}

Under this condition, we have that
\be\label{apro5}\begin{aligned}&\|B_{\alpha_1,\beta,\mu,n}(T)-B_{\alpha_2,\beta,\mu,n}(T)\|^2_{L^2}\\
&\approx\|B_{\alpha_1,\beta,\mu,n}(T)\|^2_{L^2}+\|B_{\alpha_2,\beta,\mu,n}(T)\|^2_{L^2}\approx\beta\end{aligned}\ee
and
\be\label{apro6}\begin{aligned}\|B_{\alpha_1,\beta,\mu,n}(T)-B_{\alpha_2,\beta,\mu,n}(T)\|^2_{H^s}\geq C\alpha^{2s}\beta=C.\end{aligned}\ee

If we select
\begin{eqnarray}\label{select}\alpha_1=\alpha+\frac{\delta}{2\alpha^{2s}},\ \ \alpha_2=\alpha-\frac{\delta}{2\alpha^{2s}},\ \
\alpha_1-\alpha_2=\frac{\delta}{\alpha^{2s}},\end{eqnarray}
we have that
\begin{eqnarray}\label{select2}(\alpha^{2s}(\alpha_1-\alpha_2))^2=\delta^2\end{eqnarray}
and, from (\ref{apro4}),
\begin{eqnarray}\label{select3}\alpha^{2n-1}\frac{\delta}{\alpha^{2s}}T\gg\alpha^{2s}.\end{eqnarray}
Finally, from (\ref{select3}),
\begin{eqnarray}\label{select4}T\gg\frac{\alpha^{4s-2n+1}}{\delta}.\end{eqnarray}

Since $s<\frac{2n-1}{4}$, given $\delta,T>0$, we can
choose $\alpha$ so large that (\ref{select4}) is still valid, and
then (\ref{apro6}) does not satisfy uniform continuity. The proof is
complete.
\end{proof}

\begin{cor}[Ill-Posedness of the $(2n+1)$th-order mKdV equation]\label{ill2n1thmkdv}
If $s<\frac{2n-1}{4}$, the mapping data-solution $u_0\rightarrow u(t)$, with $u(t)$ a solution of the IVP for the
(2n+1)th-order mKdV equation \eqref{nthfocmkdv} is not uniformly continuous.
\end{cor}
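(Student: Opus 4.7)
The plan is to repeat the argument of Theorem \ref{ill2n1th0} verbatim, replacing the Gardner breather $B_{\alpha,\beta,\mu,n}$ with the mKdV breather $B_{\alpha,\beta,n}$ given by \eqref{intronthBresmkdv}--\eqref{introy1y2mkdv}. The only thing to verify is that all the asymptotic estimates used in the Gardner proof survive the specialization $\mu=0$, which will turn out to be straightforward.

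First, I would observe that in the regime $\beta/\alpha \ll 1$ the mKdV breather admits the same approximation as in \eqref{5thBresG.3}--\eqref{5thBresG.4}, namely
\[
B_{\alpha,\beta,n}(t,x) \;\approx\; 2\beta \cos(\alpha(x+\delta_{2n+1} t))\,\mathrm{sech}(\beta(x+\gamma_{2n+1} t)),
\]
because from \eqref{intronthBresmkdv} one has $B_{\alpha,\beta,n} = 2\partial_x\arctan\!\bigl(\tfrac{\beta}{\alpha}\tfrac{\sin(\alpha y_1)}{\cosh(\beta y_2)}\bigr)$, and $\tfrac{\beta}{\alpha}\ll1$ linearizes the arctangent exactly as in the Gardner case. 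Consequently, taking initial data $B_{\alpha_j,\beta,n}(0,\cdot)$, $j=1,2$, with $\beta=\alpha^{-2s}$ and $\alpha_1,\alpha_2\sim\alpha$, the Fourier-side computations in \eqref{estimate}, \eqref{estimate1a} and \eqref{estimate1} are identical: the data are $H^s$-bounded uniformly, and their $H^s$-distance is controlled by $\bigl(\alpha^{2s}(\alpha_1-\alpha_2)\bigr)^2$.

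Next I would check that the group velocity asymptotics needed for disjoint supports at time $T$ survive when $\mu=0$. From Theorem \ref{MTG0}(4), $\gamma_{2n+1} = -\tfrac{1}{\beta}\operatorname{Re}\bigl[(\beta+i\alpha)^{2n+1}\bigr]$. Expanding by the binomial theorem, the leading contribution to the real part as $\beta/\alpha\to0$ comes from the term with $k=2n$, giving $(2n+1)(-1)^n\beta\,\alpha^{2n}$, so
\[
\gamma_{2n+1} \;\sim\; (-1)^{n+1}(2n+1)\,\alpha^{2n},
\]
exactly the same leading asymptotics as \eqref{apro1.1}. Hence the supports of $B_{\alpha_j,\beta,n}(T)$, $j=1,2$, which are concentrated in balls of radius $\beta^{-1}=\alpha^{2s}$ around $(-1)^n(2n+1)\alpha_j^{2n}T$, are disjoint as soon as $\alpha^{2n-1}(\alpha_1-\alpha_2)T \gg \alpha^{2s}$, cf.\ \eqref{apro4}.

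Once disjointness holds, the lower bound \eqref{apro6} yields $\|B_{\alpha_1,\beta,n}(T)-B_{\alpha_2,\beta,n}(T)\|_{H^s}\gtrsim 1$ while the initial data are $\delta$-close in $H^s$ by \eqref{select}--\eqref{select2}. Choosing $\alpha$ large with $T\gg\alpha^{4s-2n+1}/\delta$, which is admissible precisely because $s<\tfrac{2n-1}{4}$ forces the right-hand side to tend to $0$, obstructs uniform continuity of the data-to-solution map. The only mild obstacle is the first step, verifying that replacing the Gardner breather by its mKdV limit does not alter any of the asymptotic regimes; but since the ansatz \eqref{intronthBresmkdv} is literally the $\mu\to 0$ limit of \eqref{nthbreatherG} and the velocity formula is the $\mu=0$ term of \eqref{speedsnthGlemma}, this is immediate.
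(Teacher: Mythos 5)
Your proposal is correct and follows exactly the paper's route: the paper's own proof of Corollary \ref{ill2n1thmkdv} is the one-line observation that one selects $\mu=0$ in Theorem \ref{ill2n1th0}, and your argument is simply that observation carried out in detail, verifying that the breather approximation, the velocity asymptotics $\gamma_{2n+1}\sim(-1)^{n+1}(2n+1)\alpha^{2n}$, and the disjoint-support estimates all survive the specialization. No gaps; you have merely written out what the paper leaves implicit.
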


\begin{proof}
 Selecting $\mu=0$ in the above theorem, we get the result.
\end{proof}

\bigskip

\section{Remarks on periodic breathers for the 5th and 7th mKdV equations}\label{periodicB}

\medskip

In this section we provide further details on the introduction of periodic in space breathers for the mKdV hierarchy, namely, Theorem \ref{introperiodicmkdv}. 

\medskip

\subsection{5th and 7th order mKdV} We consider now, from \eqref{examplesG5}  and \eqref{examplesG7} when $\mu=0$, the  periodic case of the 5th-order mKdV:
\be\begin{aligned}\label{5mkdv}
& u_{t} +(u_{4x} + f_5(u))_x=0,\qquad f_5(u):=~10uu_x^2 +10u^2u_{xx} + 6u^5,
\end{aligned}\ee
\noindent
and the 7th-order mKdV 
\be\begin{aligned}\label{7mkdv}
u_{t} +(u_{6x} &+ f_7(u))_x=0,\\
&\qquad f_7(u):=~14u^2u_{4x} +56uu_xu_{3x} + 42uu_{xx}^2 + 70u_x^2u_{xx} \\
&\quad\quad + 70u^4u_{xx} + 140u^3u_x^2 + 20u^7.
\end{aligned}\ee
\subsection{Standard and new mKdV periodic breathers}
A family of periodic breathers (named KKSH breathers) for the \emph{classical} mKdV equation was found by Kevrekidis et al  by using elliptic 
functions and a matching of free parameters (see \cite{KKS1,KKS2,AMP2} for further reading). For the higher order mKdV equations \eqref{5mkdv} 
and \eqref{7mkdv}, an equivalent expression of periodic  breathers is  available, by following a similar 
matching of parameters. Namely, we consider here the 5th and 7th mKdV equations \eqref{5mkdv} \eqref{7mkdv} where now
\[
u:  \R_t\times \T_x \mapsto \R_x,
\]
is periodic in space, and $\T_x =\T = \R/ L\Z  =(0,L)$ denotes a torus with period $L$, to be fixed later. Higher order periodic cases can also be described 
but for the shake of simplicity, we will keep our discussion with these 5th and 7th orders.

\medskip

We refer the reader to \cite{AS,By} for a more detailed account on the Jacobi elliptic functions $\sn$ and $\nd$ presented below. The proof of Theorem \ref{introperiodicmkdv} is essentially contained in the following 

\begin{prop}[Periodic breathers of 5th and 7th mKdV equations]\label{DEF_B_per}
Given $\al,\bt>0$, $x_1,x_2\in \R$ and $k,m\in [0,1],$ the following is satisfied.

\ben
\item Periodic breather solutions of the 5th and 7th mKdV equations \eqref{5mkdv} \eqref{7mkdv}, are given by the explicit formula (see
\cite{KKS1} and \cite{AMP2} for checking and comparison reasons)
\be\label{57Bper}
\begin{aligned}
B= &~{} B(t,x; \al, \bt, k, m, x_1,x_2)   \\
:= &~{} \partial_x \tilde B :=
2\partial_x \Big[ \arctan \Big( \frac{\bt}{\al}\frac{\sn(\al y_1,k)}{\nd(\bt y_2,m)}\Big) \Big],
\end{aligned}
\ee
with $\sn(\cdot ,k)$ and $\nd (\cdot , m)$ the standard Jacobi elliptic functions of elliptic modulus $k$ and $m$, respectively, but now
\be\label{Y1_Y2}
y_1:=x+\delta_{i,m} t + x_1,  \quad y_2:=x+\ga_{i,m} t +x_2,\quad i=5,7.
\ee
\item The velocities $(\delta_{5,m},\ga_{5,m})$ in the 5th order case and  $(\delta_{7,m},\ga_{7,m})$ in the 7th order case are given by, respectively:
\be\label{perspeeds5}
\begin{aligned}
\delta_{5,m} := &~{}  -\al^4(k^2-26k+1) \\
&~{} +10\al^2\bt^2(1+k)(2-m)-5\bt^4(m^2-16m+16),\\
\ga_{5,m} :=&~{} -\bt^4(m^2+24m-24) \\
&~{}+10\al^2\bt^2(1+k)(2-m)-5\al^4(k^2+14k+1),
\end{aligned}
\ee
and
\be\label{perspeeds7}
\begin{aligned}
\delta_{7,m} := &~{}\al^6(k^3+135k^2+135k+1) \\
&~{}+ 21\al^4\bt^2(-2+k^2(m-2)+m+2k(7m-6))\\
&~{}+ 7\al^2\bt^4(1+k)(5m^2-24m+24) \\
&~{}+ 7\bt^6(m^3-2m^2+48m-48),\\
\ga_{7,m} :=&~{} -\bt^6(-m^3-254m^2-2256m+2512)\\
&~{} + 7\al^2\bt^4(1+k)(3m^2+88m-88) \\
&~{}+ 7\al^4\bt^2(5(k^2+1)(m-2)+k(70m+292))\\
&~{}+ 7\al^6(k^3+135k^2+135k+1).
\end{aligned}
\ee
\item Additionally, in order to be a periodic solution of 5th and 7th-mKdV equations
(and also for the classical mKdV), the parameters $m,k,\al$ and $\bt$ must satisfy the following commensurability conditions on the spatial periods
\be\label{Cond1}
\frac{\bt^4}{\al^4}=\frac{k}{1-m}, \qquad K(k)=\frac{\al}{2\bt} K(m),
\ee
where $K$ is the complete elliptic integral of the first kind, defined as \cite{By}
\be\label{Kint}
\begin{aligned}
K(r):=&~{}  \int_0^{\pi/2}(1-r\sin^2(s))^{-1/2}ds \\
=&~{}  \int_0^1((1-t^2)(1-rt^2))^{-1/2}dt,
\end{aligned}
\ee
and which satisfies
\[
K(0) =\frac \pi2 \quad \hbox{ and }\quad \lim_{k\to 1^-}K(k)=\infty.
\]
\item The spatial period of the breather is given by
\be\label{Largo}
L:=\frac{4}{\alpha}K(k) =\frac{2}{\bt} K(m).
\ee
\een
\end{prop}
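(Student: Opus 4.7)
The plan is a direct verification by substitution, in the spirit of the proof of Theorem \ref{MTG0} but adapted to the elliptic/periodic setting. I proceed in three stages.

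\emph{Stage 1 (Rewriting the ansatz).} Using $\partial_x\arctan(g)=g_x/(1+g^2)$ together with the Jacobi derivative rules $(\sn u)'=\cne u\,\dn u$ and $(\nd u)'=m\,\sn u\,\cd u$, I rewrite \eqref{57Bper} as a rational expression in the six quantities $\sn(\al y_1,k)$, $\cne(\al y_1,k)$, $\dn(\al y_1,k)$ and $\sn(\bt y_2,m)$, $\cne(\bt y_2,m)$, $\dn(\bt y_2,m)$, where $y_1,y_2$ are given by \eqref{Y1_Y2}. The higher spatial derivatives $B_x,\ldots,B_{6x}$ and the time derivative $B_t$ then follow recursively by the chain rule, producing rational functions in the same six quantities whose denominators are polynomial in $\nd(\bt y_2,m)$ and in the positive function $N:=\al^2\nd^2(\bt y_2,m)+\bt^2\sn^2(\al y_1,k)$ coming from $1+(\bt/\al)^2\sn^2/\nd^2$.

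\emph{Stage 2 (Substitution and coefficient matching).} Plugging the above expressions into \eqref{5mkdv} (resp.\ \eqref{7mkdv}) yields a single rational identity to be verified. After clearing denominators and reducing via the Pythagorean identities $\sn^2+\cne^2=1$ and $\dn^2+k^2\sn^2=1$ (and their $m$-analogues), the left-hand side becomes a polynomial in monomials of the form $\sn^a\cne^b\dn^c(\al y_1,k)\,\sn^{a'}\cne^{b'}\dn^{c'}(\bt y_2,m)$. Since the translations $x_1,x_2$ in \eqref{Y1_Y2} are free parameters, the arguments $\al y_1$ and $\bt y_2$ may be treated as functionally independent, so each monomial coefficient must vanish separately. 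Solving the resulting algebraic system determines the speeds exactly as in \eqref{perspeeds5}--\eqref{perspeeds7} and forces the single algebraic constraint $\bt^4/\al^4=k/(1-m)$ of \eqref{Cond1}.

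\emph{Stage 3 (Periodicity and the period formula).} For $B(t,\cdot)$ to descend to $\T_x=\R/L\Z$, the two building blocks $\sn(\al y_1,k)$ and $\nd(\bt y_2,m)$ must share a common $x$-period. Since $x\mapsto\sn(\al(x+\cdot),k)$ has period $4K(k)/\al$ and $x\mapsto\nd(\bt(x+\cdot),m)$ has period $2K(m)/\bt$, requiring them to coincide gives simultaneously $L=4K(k)/\al=2K(m)/\bt$, which is the remaining identity $K(k)=(\al/(2\bt))K(m)$ of \eqref{Cond1} together with the length formula \eqref{Largo}. This establishes parts (3) and (4).

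The main obstacle is clearly Stage 2: for the 7th-order equation the polynomial in elliptic monomials carries a large number of independent coefficients, and one must check that after imposing $\bt^4/\al^4=k/(1-m)$ all of them vanish simultaneously under the choice \eqref{perspeeds7}. This is precisely the computation that the authors delegate to symbolic software. A useful consistency check on the output is that in the appropriate solitonic degeneration $k\to 0$, $m\to 1$ the formulas \eqref{perspeeds5}--\eqref{perspeeds7} specialize to the mKdV hierarchy speeds $\de_{2n+1},\ga_{2n+1}$ obtained in Theorem \ref{MTG0}(4) with $\mu=0$.
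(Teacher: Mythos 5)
Your proposal follows essentially the same route as the paper, which itself only records that the result ``follows directly, after cumbersome computations, or the use of a standard symbolic software'' via a matching of the free parameters in the elliptic ansatz: your three stages (rewriting in Jacobi functions, clearing denominators and matching coefficients of independent elliptic monomials to extract the speeds and the constraint $\bt^4/\al^4=k/(1-m)$, and deriving the common-period condition $4K(k)/\al=2K(m)/\bt$) are a faithful expansion of exactly that computation, and your degeneration check $k\to0$, $m\to1$ correctly recovers $\de_5,\ga_5$ and $\de_7,\ga_7$. One small correction that does not affect the structure of the argument: the derivative rule is $\frac{d}{du}\nd(u,m)=m\,\sd(u,m)\,\cd(u,m)=m\,\sn(u,m)\cne(u,m)/\dn^2(u,m)$, not $m\,\sn u\,\cd u$.
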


\begin{rem}
Note that conditions \eqref{Cond1}  formally imply that the periodic breather $B$ \eqref{57Bper} 
has only four independent parameters (e.g. $\bt,~k$ and translations $x_1,~x_2$). Additionally, if we assume that
the ratio $\bt/\al$ stays bounded, we have that $k$ approaches $0$ as $m$ is close to $1$. Using this information, the standard non
periodic  5th and 7th-mKdV breathers can be formally recovered as the limit of very large spatial period $L\to +\infty$, obtained e.g.
if $k\to 0.$
\end{rem}

\begin{rem}
Note that these breathers can be written using only two parametric variables,
say $\bt$ and $k$, and have a characteristic period $L=L(\bt,k)$, with $L\to +\infty$ as $k\to 0$.
Moreover, compare the periodic higher order velocities $(\delta_{i,m},\ga_{i,m}),~i=5,7$ above, with the equivalent periodic ones in the simpler classical
mKdV case
\cite[Def.1.1]{AMP2}:
\be\label{perDG}
 \delta := \al^2(1+k) +3\bt^2(m-2), \quad \hbox{ and }\quad \ga := 3\al^2(1+k) +\bt^2(m-2),
 \ee
 \noindent
and with  velocities  $(\delta_i,\ga_i),~i=5,7,$  (\eqref{speeds5G}-\eqref{speeds7G} when $\mu=0$) in the non periodic case:
\be\label{speeds579}
\begin{aligned}
&\delta_5 := -\al^4+10\al^2\bt^2-5\bt^4,\quad\ga_5 :=-\bt^4+10\al^2\bt^2-5\al^4,\\
\end{aligned}\ee
\noindent  and
\be\label{speeds579-1}
\begin{aligned}
\delta_7 := &~{} \al^6-21\al^4\bt^2+35\al^2\bt^4-7\bt^6,\\
\ga_7 :=&~{} -\bt^6+21\al^2\bt^4-35\al^4\bt^2+7\al^6.
\end{aligned}\ee
\end{rem}

\appendix
\section{Notation in proof of Theorem \ref{introMTG0}}\label{NotacionTheo}

We will use the following notation for the sake of simplicity:

\begin{align}\label{notacionN17}
&  N_x:= 2\,\alpha\, \left({a_{{1}}}^{2}-{\alpha}^{2}{a_{{3}}}^{2}+{a_{{3}}}^{2}{\beta}^{ 2}
\right)\sin \left( y_{{1}} \right) \cos \left( y_{{1}} \right)-2\,\alpha\,a_{{1}}a_{{2}}{{\rm e}^{y_{{2}}}}\cos \left(
y_ {{1}} \right)\nonu\\
&-2\,a_{{1}}a_{{2}}\beta\,{{\rm e}^{y_{{2}}}}\sin\left( y_{{1}} \right) +2\,{a_{{2}}}^{2}\beta\,{{\rm e}^{2\,y_{{2}}}}
+2\,{\alpha}^{2}{a_{{3}}}^{2}\beta\, \left( \sin \left( y_{{1}} \right) \right) ^{2}\nonu\\
&-2\,{\alpha}^{2}{a_{{3}}}^{2}\beta\, \left( \cos \left( y_{{1}} \right)  \right) ^{2}+2\,{\alpha}^{2}a_{{3}}\sin \left(y_{{1}} \right) 
\cosh \left( y_{{2}}\right)-2\,\alpha\,a_{{3}}\beta\,\cos \left( y_{{1}} \right) \sinh \left( y_{{2}} \right) \nonu\\
&  +2\,\alpha\,a_{{3}}\beta\,\cos \left( y_{{1}} \right) \cosh \left( y_{{2}} \right)
+2\, a_{{3}}{\beta}^{2}\sin \left( y_{{1}} \right) \sinh \left(y_{{2}} \right) +2\,\beta\,\sinh \left( y_{{2}} \right) \cosh \left( y_{{2}}
 \right),
\end{align}
 \begin{align}\label{notacionN18}
&  N_{xx}:=8\,{\alpha}^{3}{a_{{3}}}^{2}\beta\,\sin \left(
y_{{1}} \right)\cos \left( y_{{1}} \right) -4\,\alpha\,a_{{1}}a_{{2}}\beta\,{{\rm e}^{y_{{2}}}}\cos \left( y_{{1}} \right)
+4\,{a_{{2}}}^{2}{\beta}^{2}{{\rm e}^{2\,y_{{2}}}}\nonumber\\
&+2\,{\alpha}^{2} \left( {a_{{1}}}^{2}-{\alpha}^{2}{a_{{3}}}^{2}+{a_{{3}}}^{2}{\beta}^{2} \right)
 \left( \cos \left( y_{{1}} \right)  \right) ^{2} -2\,{\alpha}^{2} \left( {a_{{1}}}^{2}-{\alpha}^{2}{a_{{3}}}^{2}+{a_{{3}}}^{ 2}{\beta}^{2}
\right)  \left( \sin \left( y_{{1}} \right)  \right) ^{2 }\nonu\\
&-2\,a_{{3}}\beta\,\left( {\alpha}^{2}-{\beta}^{2} \right)\sin\left( y_{{1}} \right) \cosh \left( y_{{2}}\right) +4\,\alpha\,a_{{3}}{\beta}^{2}\cos
 \left( y_{{1}} \right) \sinh \left( y_{{2}} \right) \nonu\\
&+4\,{\alpha}^{2}a _{{3}}\beta\,\sin \left( y_{{1}} \right) \sinh\left( y_{{2}} \right) +2\,{\beta}^{2} \left( \cosh \left( y_{{2}}\right) \right) ^{2}+2\,{ \beta}^{2} \left( \sinh \left( y_{{2}}
\right) \right)^{2},\nonu\\
&+2\,a_{{1} }a_{{2}}\left( {\alpha}^{2}-{\beta}^{2} \right){{\rm e}^{y_{{2}}}}\sin \left( y_{{1}} \right) 
 +2\alpha\,a_{{3}}\,\left( {\alpha}^{2}-{\beta}^{2} \right)\cos \left( y_{{1}} \right)\cosh
\left( y_{{2}} \right)
\end{align}
\begin{align}\label{notacionN19}
&  N_{3x}:=-8\,{\alpha}^{3}\left( {a_{{1}}}^{2}-{\alpha}^{2}{a_{{3}}}^{2}+{a_{{3}}}^{2}
{\beta}^{2} \right)\sin \left( y_{{1}} \right)\cos \left( y_{{1}} \right)+8\,{a_{{2}}}^{2}{\beta}^{3}{{\rm e}^{2\,y_{{2}}}}\nonu\\
&+2\,a_{{1}}a_{{2}}\alpha\,\left({\alpha}^{2}-3\,{\beta}^{2}\right){{\rm e}^{y_{{2}}}}\cos \left( y_{{1}} \right)
+8\,{\alpha}^{4}{a_{{3}}}^{2}\beta\, \left( \cos \left( y_{{1}} \right)  \right) ^{2}-8\,{\alpha}^{4}{a_{{3}}}^{2}\beta\left( \sin \left( y_{{1}} \right)
\right)^{2}\nonu\\
 &+8\,{\beta}^{3}\sinh \left( y_{{2}} \right) \cosh \left( y_{{2}} \right)-2\,{\alpha} ^{2}a_{{3}}\left({\alpha}^{2}-3\,{\beta}^{2}\right)\sin
\left( y_{{1}} \right)\cosh \left( y_{{2}}\right)\nonu\\
&-2\,{\beta}^{2}a_{{3}}\left( 3\, {\alpha}^{2}-{\beta}^{2}
\right)\sin \left( y_{{1}} \right)\sinh \left( y_{{2}}\right)+2\,a_{{1}}a_{{2}}
\beta\,\left( 3\,{\alpha}^{2}-{\beta}^{2} \right){{\rm e}^{y_{{2}}}}\sin \left( y_{{1}} \right)\nonu\\
&-2\,\alpha\,a_{{3}}\beta\,\left( {\alpha}^{2}-3\,{ \beta}^{2}\right)\cos
 \left( y_{{1}}\right)\cosh \left( y_{{2}}\right)+2\,a_{{3}}\alpha\,\beta\,\left(3\,{\alpha}^{2}-{\beta}^{2} \right)
 \cos \left( y_{{1}} \right)\sinh \left( y_{{2}} \right)
\end{align}
\begin{align}\label{notacionN20}
&  N_{4x}:=-32\,{\alpha}^{5}{a_{{3}}}^{2}\beta\,\cos \left(y_{{1}} \right) \sin \left( y_{{1}} \right) 
+8\,\alpha\,a_{{1}}a_{{2}}\beta\,\left( {\alpha}^{2}-{\beta}^{2} \right){{\rm e}^{y_{{2}}}}\cos \left( y_{{1}} \right)  \nonu\\
 &-2\,a_{{1}}a_{{2}}\left({\alpha}^{4}+{\beta}^{4}-6\,{\alpha}^{2}{\beta}^{2} \right){{\rm e}^{y _{{2}}}}\sin \left( y_{{1}}\right)
 +16\,{a_{{2}}}^{2}{\beta}^{4}{{\rm e}^{2\,y_{{2}}}}+8\,{\beta}^{4} \left( \sinh \left( y_{{2}} \right)  \right) ^{2}\nonu\\
&+8\,{\alpha}^{4}\left({a_{{1}}}^{2}-{\alpha}^{2}{a_{{3}}}^{2}+{a_{{3}}}^{2}{ \beta}^{2}\right) \left( \sin \left( y_{{1}} \right)  \right)
^{2}+8\,{\beta}^{4} \left( \cosh \left( y_{{2}} \right)  \right) ^{2}\nonu\\
 &-2\,\alpha\,a_{{3}}\left( {\alpha}^{4}+{\beta}^{4}-6\,{\alpha}^{2}{\beta}^{2} \right) \cos\left( y_{{1}} \right)\cosh \left( y_{{2}} \right)\nonu\\
 &-8\,\alpha\,a_{{3}}{\beta}^{2}\left( {\alpha}^{2}-{\beta}^{2} \right) \cos \left( y_{{1}} \right) \sinh\left( y_{{2}} \right) 
 -8\,{\alpha}^{4}\left( {a_{{1}}}^{2}-{\alpha}^{2}{a_{{3}}}^{2}+{a_{{3}}}^{2}{\beta}^{2}
 \right) \left( \cos \left( y_{{1}} \right) \right)^{2}\nonu\\
&+2\,a_{{3}}\beta\,\left({\alpha}^{4}+{\beta}^{4}-6\,{\alpha}^{2}{\beta}^ {2} \right)\sin\left( y_{{1}} \right) \cosh \left( y_{{2}} \right)-8\,{\alpha}^{2}a_{{3}}\left( {\alpha}^{2}-
{\beta}^{2} \right)\sin \left( y_{ {1}} \right) \beta\,\sinh \left(y_{{2}} \right)
\end{align}
\noindent
and
\begin{align}\label{notacionHN7}
&  H_x:=2\, \left({\alpha}^{2}+{\beta}^{2}\right)  \left( a_{
{2}}a_{{3}}\alpha\,{{\rm e}^{y_{{2}}}}\cos \left( y_{{1}} \right)
-a_{{1}}\cosh \left( y_{{2 }} \right) \sin \left( y_{{1}} \right)
-a_{{2}}a_{{3}}\beta\,{ {\rm e}^{y_{{2}}}}\sin \left(y_{{1}} \right)  \right),
\end{align}
\begin{align}\label{notacionHN7a}
&  H_{xx}:=-2\,\left({\alpha}^{2}+{\beta}^{2}\right)(\alpha\,a_{{1}} \cosh \left(y_{{2}} \right)\cos \left( y_{{1}}\right)+a_{{1}}\beta\,\sinh
 \left( y_{{2}} \right) \sin \left( y_{{1}} \right)\quad\quad\quad\quad\nonu\\
&+a_{{2}}a_{{3}}\left({\alpha}^{2}+{\beta}^{2}\right){{\rm e}^{y_{{2}}}}\sin \left(y_{{1}} \right)),
\end{align}
\begin{align}\label{notacionHN7b}
&  H_{3x}:=2\,a_{{1}}\left( {\alpha}^{4}-{\beta}^{4}
\right)\cosh \left( y_{{2}} \right)\sin \left( y_{{1}}\right)
-4\,a_{{1}}\alpha\,\beta\,\left( {\alpha}^{2}+{\beta}^ {2}
\right)\sinh\left(y_{{2}}\right)\cos\left( y_{{1}}\right)\quad\quad\nonu\\
&-2\,a_{{2}}a_{{3}}\beta\,\left( {\alpha}^{2}+{\beta}^ {2} \right) ^
{2}{{\rm e}^{y_{{2}}}}\sin \left(y_{{1}}
\right)-2\,a_{{2}}a_{{3}}\alpha\,  \left( {\alpha}^{2}+{\beta}^ {2}
\right) ^{2}{{\rm e}^{y_{{2}}}}\cos\left(y_{{1}}\right),
\end{align}
\begin{align}\label{notacionHN7c}
&  H_{4x}:=2\,a_{{1}}\alpha\,\left({\alpha}^{4}-2\,{\alpha}^{2}{\beta}^{2}-3\,{\beta}^{4 } \right)\cosh\left( y_{{2}} \right)\cos \left( y_{{1}}
 \right) \nonumber\\
&-4\,a_{{2}}a_{{3}}\alpha\,\beta\,\left( {\alpha}^{2}+{\beta}^{2}\right)^2{{\rm e}^{y_{{2}} }}\cos \left( y_{{1}} \right)\nonu\\
&+2\,a_{{1}}\beta\, \left(3\,{\alpha}^{4}+2\,{ \alpha}^{2}{\beta}^{2}-{\beta}^{4}\right)\sinh\left( y_{{2}}\right)\sin \left( y _{{1}}
\right)\nonu\\
&+2\,a_{{2}}a_{{3}}\left({\alpha}^{6}+{\alpha}^{4}{\beta}^{2}-{\alpha}^{ 2}{\beta}^{4}-{\beta}^{6}\right){{\rm e}^{y_{{2}}}} \sin \left( y_{{1}} \right).
\end{align}
\noindent

\section{Additional higher order mKdV and Gardner equations}
For the sake of completeness and forthcoming work by elsewhere, we list below higher order members of the mKdV and Gardner hierarchies \eqref{nthfocmkdv}-\eqref{nthfocG}.
We start with the  9th order mKdV equation which is written as follows
\be\label{9mkdv}
\begin{aligned}
u_t &+ \partial_x\Big(u_{8x} + 18u^2u_{6x} + 108uu_xu_{5x} +228uu_{2x}u_{4x} + 210u_x^2u_{4x} + 126u^4u_{4x} \\
&+ 138u(u_{3x})^2 + 756u_xu_{2x}u_{3x} + 1008u^3u_xu_{3x} + 182u_{2x}^3 + 756u^3u_{2x}^2 \\
&+ 3108u^2u_x^2u_{2x} + 420u^6u_{2x} + 798uu_x^4 + 1260u^5(u_x)^2 + 70u^9\Big)=0.
\end{aligned}\ee

The 11th order mKdV equation is written as follows

\be\label{11mkdv}
\begin{aligned}
&u_t + \partial_x\Big(
u_{10x}+22 u^2 u_{8x}+198 u^4u_{6x}+924 u^6 u_{4x}+506 u u_{4x}^2 +3036 u^3 u_{3x}^2\\
&+2310 u^8 u_{xx}+8316 u^5 u_{xx}^2+9372 u^2 u_{xx}^3+9240 u^7 u_x^2 +26796 u^3 u_x^4\\
&+176 u u_x u_{7x}+484 uu_{xx} u_{6x}+462 u_x^2 u_{6x}+836 u u_{3x}u_{5x}+2376 u^3 u_xu_{5x}\\
&+5016 u^3 u_{xx} u_{4x}+2706 u_{xx}^2 u_{4x}+11220 u^2 u_x^2 u_{4x}+3498 u_{xx}u_{3x}^2+11088 u^5 u_x u_{3x}\\
&+54516 u^4 u_x^2 u_{xx}+44748 u u_x^2 u_{xx}^2+13398 u_x^4 u_{xx}+2376 u_x u_{xx} u_{5x}\\
&+21120 uu_x^3 u_{3x}+3696 u_x u_{3x} u_{4x}+39336 u^2 u_x u_{xx} u_{3x}+252 u^{11}\Big)=0.
\end{aligned}\ee

We finally present the 13th order mKdV equation 

\be\label{13mkdv}
\begin{aligned}
&u_t+\partial_x\Big(u_{12x} + 1846\, uu_{5x}^{2}+191620 u^{4}u_{xx}^{3}+924u^{13}+30888u^{5}u_{x}u_{5x}\\
&+1733160u^{3}u_{xx}^{2}u_{x}^{2}+823680u^{3}u_{x}^{3}u_{3x}+1398540u^{2}u_{x}^{4}u_{xx}\\
&+648648u^{6}u_{x}^{2}u_{xx}+96096u^{7}u_{x}u_{3x}+3172u u_{4x}u_{6x}+1976u u_{3x}u_{7x}\\
&+884u u_{xx}u_{8x}+260u  u_{x}u_{9x}+324324u u_{x}^{2}u_{3x}^{2}+78936uu_{x}^{3}u_{5x}\\
&+511368uu_{x}^{2}u_{xx}u_{4x}+919776uu_{x}u_{xx}^{2}u_{3x}+566280u_{x}^{3}u_{3x}u_{xx}\\
&+17160u_{x}u_{4x}u_{5x}+5720u_{x}u_{xx}u_{7x}+12012u_{x}u_{3x}u_{6x}+33176\,u_{xx}u_{3x}u_{5x}\\
&+21736u^{3}u_{3x}u_{5x}+12584u^{3}u_{xx}u_{6x}+4576u^{3}u_{x}u_{7x}+29172u^{2}u_{x}^{2}u_{6x}\\
&+231660u^{4}u_x^{2}u_{4x}+65208u^{5}u_{xx}u_{4x}+78078u_{x}^{4}u_{4x}+403260u_{x}^2u_{xx}^{3}\\
&+8866u_{xx}^{2}u_{6x}+20306u_{xx}u_{4x}^{2}+858u_{x}^{2}u_{8x}+26598u_{3x}^{2}u_{4x}\\
&+468468u^{5}u_{x}^{4}+60060u^{9}u_{x}^{2}+157300uu_{x}^{6}+13156u^{3}u_{4x}^{2}+286u^{4}u_{8x}
\end{aligned}\ee
\be\begin{aligned}
&+39468u^{5}u_{3x}^{2} +1716u^{6}u_{6x}+12012u^{10}u_{xx}+72072u^{7}u_{xx}^{2}+6006u^{8}u_{4x}\\
&+108966uu_{xx}^{4}+26u^{2}u_{10x}+156156u^2u_{xx}^{2}u_{4x}+197340u^{2} u_{xx}u_{3x}^{2}\\
&+219648u^{2}u_{x}u_{3x}u_{4x}+144144u^{2}u_{x}u_{xx}u_{5x}+806520u^{4}u_{x}u_{xx}u_{3x}\Big)=0.
\end{aligned}\ee

Now, associated with these additional higher order mKdV equations, we provide the corresponding breather solutions, as in \eqref{intronthBresmkdv}:

\begin{defn}[9th-11th-mKdV breathers]\label{911breather} Let $\al, \bt >0$
and $x_1,x_2\in \R$. The real-valued  breather solution associated to the 9th-11th-mKdV equations 
\eqref{9mkdv}-\eqref{11mkdv}-\eqref{13mkdv} are given explicitly by the formula
\be\label{911Bre}B\equiv B_{\al, \bt}(t,x;x_1,x_2)   
 :=   2\partial_x\Bigg[\arctan\Big(\frac{\bt}{\al}\frac{\sin(\al y_1)}{\cosh(\bt y_2)}\Big)\Bigg],
\ee
with $y_1$ and $y_2$
\be\label{y1y2GE9}
\begin{aligned}
&y_1 = x+ \delta_i t + x_1, \quad y_2 = x+ \ga_i t + x_2,~~~\end{aligned}\ee
\noindent 
and with  velocities  $(\delta_i,\ga_i),~i=9,11$
\be\label{speeds9}
\begin{aligned}
&\delta_9 := -\al^8 + 36\al^6\bt^2 - 126\al^4\bt^4 + 84\al^3\bt^6-9\bt^8,\qquad\qquad\quad\quad\\
&\ga_9 :=-\bt^8+36\al^2\bt^6-126\al^4\bt^4 + 84\al^6\bt^2-9\al^8,\qquad\qquad\quad\quad\\
&\\
\end{aligned}\ee
\noindent and
\be\label{speeds11}
\begin{aligned}
& \delta_{11} = \alpha ^{10}-55 \alpha ^8 \beta ^2+330 \alpha ^6 \beta ^4-462 \alpha ^4 \beta ^6 +165 \alpha ^2 \beta ^8-11\bt^{10},\\
 & \gamma_{11} = 11 \alpha ^{10} -165 \alpha ^8 \beta ^2+462 \alpha ^6 \beta ^4-330 \alpha ^4 \beta ^6+55 \alpha ^2 \beta ^8-\beta ^{10}.
\end{aligned}\ee
\end{defn}

\medskip

Finally, and  directly from definition of the Gardner hierarchy \eqref{nthfocG} with $n=4$ or alternatively, from the 9th and 11th order mKdV equations
\eqref{9mkdv}-\eqref{11mkdv} with the transformation $u\rightarrow \mu + u$, we present the 9th order Gardner equation:
\be\label{9gardner}
\begin{aligned}
u_t &+ \partial_x\Big(u_{8x} + 18(\mu + u)^2u_{6x} + 108(\mu + u)u_xu_{5x} +228(\mu + u)u_{2x}u_{4x} + 210u_x^2u_{4x} \\
& + 126(\mu + u)^4u_{4x} + 138(\mu + u)u_{3x}^2 + 756u_xu_{2x}u_{3x} + 1008u^3u_xu_{3x} + 182u_{2x}^3 \\
&+ 756(\mu + u)^3u_{2x}^2  + 3108(\mu + u)^2(u_x)^2u_{2x} + 420(\mu + u)^6u_{2x} + 798(\mu + u)u_x^4\\
&+ 1260(\mu + u)^5u_x^2 + 70(\mu + u)^9\Big)=0,
\end{aligned}\ee
\noindent
and the 11th order Gardner equation

\be\label{11gardner}
\begin{aligned}
&u_t + \partial_x\Big(
u_{10x}+22 (\mu + u)^2 u_{8x}+198 (\mu + u)^4u_{6x}+924 (\mu + u)^6 u_{4x}+506 (\mu + u)u_{4x}^2\\
& +3036 (\mu + u)^3 u_{3x}^2 +2310 (\mu + u)^8 u_{xx} + 8316 (\mu + u)^5 u_{xx}^2 + 9372 (\mu + u)^2 u_{xx}^3\\
&+9240 (\mu + u)^7 \left(u_x\right)^2 +26796 (\mu + u)^3 u_x^4+176 (\mu + u)u_x u_{7x}+484 (\mu + u)u_{xx} u_{6x}\\
&+462 u_x^2 u_{6x}+836 (\mu + u)u_{3x}u_{5x} + 2376 (\mu + u)^3 u_xu_{5x}+5016 (\mu + u)^3 u_{xx} u_{4x}\\
&+ 2706 u_{xx}^2 u_{4x}+11220 (\mu + u)^2 u_x^2 u_{4x} + 3498 u_{xx}u_{3x}^2 + 11088 (\mu + u)^5 u_x u_{3x}\\
&+54516 (\mu + u)^4 u_x^2 u_{xx} + 44748 (\mu + u) u_x^2 u_{xx}^2 + 13398 u_x^4 u_{xx} + 2376 u_x u_{xx} u_{5x}\\
&+21120 (\mu + u)u_x^3 u_{3x} + 3696 u_x u_{3x} u_{4x}+39336 (\mu + u)^2 u_x u_{xx} u_{3x}\\
&+252 (\mu + u)^{11}\Big)=0.
\end{aligned}\ee

\medskip
The corresponding breather solutions of the 9th and 11th order Gardner equations \eqref{9gardner}-\eqref{11gardner} are the following

\begin{defn}[9th-11th-Gardner breathers]\label{9GardnerB} Let $\al, \bt, \mu$ as in definition \ref{nthbreatherG}
and $x_1,x_2\in \R$. Then the real-valued  breather solution associated to the 9th and 11th-Gardner equations
\eqref{9gardner}-\eqref{11gardner} is given explicitly by the same formula as in \eqref{nthbreatherG} but respectively 
with  velocities  $(\delta_9,\ga_9)$ and $(\delta_{11},\ga_{11})$:

\be\label{speeds9ga}
\begin{aligned}
&\delta_9 := \alpha (\alpha ^8-18 \alpha ^6 \left(2 \beta ^2+\mu ^2\right)+126 \alpha ^4 \left(\beta ^4+3 \beta ^2 \mu ^2+\mu ^4\right)\\
&-42 \alpha ^2 \left(2 \beta ^6+15 \beta ^4 \mu ^2 +30 \beta ^2 \mu ^4+10 \mu ^6\right)+9 (\beta ^8+14 \beta ^6 \mu ^2+70 \beta ^4 \mu ^4\\
&+140 \beta ^2 \mu ^6+70 \mu ^8))\\
&\\
&\ga_9 :=-\beta  (9 \alpha ^8-42 \alpha ^6 \left(2 \beta ^2+3 \mu ^2\right)+126 \alpha ^4 \left(\beta ^4+5 \beta ^2 \mu ^2+5 \mu ^4\right)\\
&-18 \alpha ^2 \left(2 \beta ^6+21 \beta ^4 \mu ^2+70 \beta ^2 \mu ^4+70 \mu ^6\right)+\beta ^8+18 \beta ^6 \mu ^2+126 \beta ^4 \mu ^4\\
&+420 \beta ^2 \mu ^6+630 \mu ^8),\\
\end{aligned}\ee
\noindent
and
\be\label{speeds11ga}
\begin{aligned}
&\delta_{11} := \alpha  (\alpha ^{10}-11 \alpha ^8 \left(5 \beta ^2+2 \mu ^2\right)+66 \alpha ^6 (5 \beta ^4+12 \beta ^2 \mu ^2+3 \mu ^4)\\
&-462 \alpha ^4 \left(\beta ^6+6 \beta ^4 \mu ^2+9 \beta ^2 \mu ^4+2 \mu ^6\right)+33 \alpha ^2 (5 \beta ^8+56 \beta ^6 \mu ^2+210 \beta ^4 \mu ^4\\
&+280 \beta ^2 \mu ^6+70 \mu ^8)-11 (\beta ^{10}+18 \beta ^8 \mu ^2+126 \beta ^6 \mu ^4+420 \beta ^4 \mu ^6+630 \beta ^2 \mu ^8\\
&+252 \mu ^{10}))\\
&\\
&\ga_{11} :=-\beta  (-11 \alpha ^{10}+33 \alpha ^8 \left(5 \beta ^2+6 \mu ^2\right)-462 \alpha ^6 (\beta ^4+4 \beta ^2 \mu ^2+3 \mu ^4)\\
&+66 \alpha ^4 \left(5 \beta ^6+42 \beta ^4 \mu ^2+105 \beta ^2 \mu ^4+70 \mu ^6\right)-11 \alpha ^2 (5 \beta ^8+72 \beta ^6 \mu ^2+378 \beta ^4 \mu ^4\\
&+840 \beta ^2 \mu ^6+630 \mu ^8)+\beta ^{10}+22 \beta ^8 \mu ^2+198 \beta ^6 \mu ^4+924 \beta ^4 \mu ^6+2310 \beta ^2 \mu ^8\\
&+2772 \mu ^{10}).\\
\end{aligned}\ee
\end{defn}

\end{document}